\theoremstyle{plain} %default
\newtheorem{thm}{Theorem}[section]
\newtheorem{prop}[thm]{Proposition}
\newtheorem{cor}[thm]{Corollary}
\theoremstyle{definition}
\newtheorem{lem}[thm]{Lemma}
\newtheorem{dfn}[thm]{Definition}
\newtheorem{eg}[thm]{Example}
\newtheorem{ques}[thm]{Question}
\numberwithin{equation}{section}
\newcommand{\fm}{\mathfrak{m}}
\newcommand{\fn}{\mathfrak{n}}
\newcommand{\fp}{\mathfrak{p}}
\newcommand{\bR}{\mathbb{R}}
\newcommand{\bQ}{\mathbb{Q}}
\newcommand{\bZ}{\mathbb{Z}}
\newcommand{\D}{\mathbb{D}}
\newcommand{\NN}{\mathbb{N}}
\DeclareMathOperator{\Ass}{Ass}
\DeclareMathOperator{\CI}{\textnormal{CI-dim}}
\DeclareMathOperator{\G-dim}{\textnormal{G-dim}}
 \DeclareMathOperator{\cx}{cx}
\DeclareMathOperator{\Tr}{Tr}
\DeclareMathOperator{\coker}{coker}
\DeclareMathOperator{\depth}{depth}
\DeclareMathOperator{\Ext}{Ext}
\DeclareMathOperator{\Hom}{Hom}
\DeclareMathOperator{\len}{length}
\DeclareMathOperator{\pd}{pd}
\DeclareMathOperator{\Spec}{Spec}
\DeclareMathOperator{\Tor}{Tor}
\DeclareMathOperator{\cod}{codim}
\DeclareMathOperator{\emb}{embdim}
\def\urltilda{\kern -.15em\lower .7ex\hbox{\~{}}\kern .04em}
\def\urldot{\kern -.10em.\kern -.10em}\def\urlhttp{http\kern -.10em\lower -.1ex
\hbox{:}\kern -.12em\lower 0ex\hbox{/}\kern -.18em\lower 0ex\hbox{/}}
\begin{document}

\title[vanishing of cohomology over complete intersection rings]{vanishing of cohomology over complete intersection rings}
\author[A. Sadeghi]{Arash Sadeghi}
\address{School of Mathematics, Institute for Research in Fundamental Sciences (IPM), P.O. Box: 19395-5746, Tehran, Iran.}
\email{sadeghiarash61@gmail.com}

\subjclass[2000]{13D07, 13H10}

\keywords{Complete intersection dimension, Complexity, Vanishing of cohomology, Grothendieck group.}

\date{March 18, 2013.}

\begin{abstract}
Let $R$ be a complete intersection ring and let $M$ and $N$ be $R$--modules. It is shown that the vanishing of $\Ext^i_R(M,N)$ for a certain number of consecutive values of $i$ starting
at $n$ forces the complete intersection dimension of $M$ to be at most $n-1$. We also estimate the complete intersection dimension of $M^*$, the dual of $M$, in terms of vanishing of the cohomology modules, $\Ext^i_R(M,N)$.
\end{abstract}

\maketitle{}

\setcounter{tocdepth}{1}
\tableofcontents
\section{Introduction}
In this paper, we study the relationship between the vanishing of $\Ext^i_R(M,N)$
for various consecutive values of $i$, and the complete intersection dimensions of $M$ and $M^*$, the dual of $M$.
The vanishing of homology was first studied by Auslander \cite{A2}. For two finitely generated modules $M$ and $N$ over an unramified regular local ring $R$, he proved that if $\Tor_i^R(M,N)=0$ for some $i>0$, then $\Tor_n^R(M,N)=0$ for all $i\geq n$.
In \cite{L}, Lichtenbaum settled the ramified case.
It is easy to see that a similar statement is not true in general, with Tor replaced by Ext.
In \cite{Jot2}, Jothilingam studied the vanishing of cohomology by using the rigidity Theorem of Auslander.
For two nonzero modules $M$ and $N$ over a regular local ring $R$, he proved that if $M$ satisfies $(S_n)$ for some $n\geq0$ and
$\Ext^i_R(M,N)=0$ for some positive integer $i$ such that $i\geq\depth_R(N)-n$, then $\Ext^j_R(M,N)=0$ for all $j\geq i$.
In \cite{JoDu}, Jothilingam and Duraivel studied the relationship between the vanishing of $\Ext^i_R(M,N)$
and the freeness of $M^*$. For two nonzero modules $M$ and $N$ over a regular local ring $R$, they proved that
if $\Ext^i_R(M,N)=0$ for all $1\leq i\leq\max\{1, \depth_R(N)-2\}$, then $M^*$ is free.
In this paper we are going to generalize these results.

An $R$--module $M$ is said to be $\emph{c-rigid}$ if for all $R$--modules $N$, $\Tor_{i+1}^R(M,N)=\Tor_{i+2}^R(M,N)=\cdots=\Tor_{i+c}^R(M,N)=0$ for some $i\geq0$ implies that $\Tor_n^R(M,N)=0$ for all $n>i$. If $c=1$ then we simply say that $M$ is rigid.

The aim of this paper is to study the following question.
\begin{ques}\label{q}
Let $R$ be a Gorenstein local ring and let $M$ and $N$ be $R$--modules such that $N$ has reducible complexity.
Assume that $n\geq0$, $c>0$ are integers and that $N$ is $c$-rigid. If $\Ext^i_R(M,N)=0$ for all $i$, $1\leq i\leq\max\{c, \depth_R(N)-n\}$,
then what can we say about the Gorenstein dimensions of $M$ and $M^*$?
\end{ques}
In section 2, we collect necessary notations, definitions and some known results which will be used in this paper.

In section 3, we study the Question \ref{q} for rigid modules.
Over a Gorenstein local ring $R$, given nonzero $R$-modules $M$ and $N$ such that $N$ has reducible complexity, we show that if $N$ is rigid and $\Ext^i_R(M,N)=0$ for all $i$, $1\leq i\leq\max\{1,\depth_R(N)-n\}$ and some $n\geq2$, then $\G-dim_R(M^*)\leq n-2$, which is a generalization of \cite[Theorem 1]{JoDu}. In particular, if $M$ satisfies $(S_n)$, then $\G-dim_R(M)=0$ (see Theorem \ref{t12}).
As a consequence, for two nonzero modules $M$ and $N$ over a complete intersection ring $R$, it is shown that if $N$ is rigid and $\Ext^i_R(M,N)=0$ for some positive integer $i\geq\depth_R(N)$, then $\CI_R(M)=\sup\{i\mid\Ext^i_R(M,N)\neq0\}<i$ (see Theorem \ref{cor7}).

In section 4, we generalize \cite[Corollary 1]{Jot2} for modules over a complete intersection ring. For two modules $M$ and $N$ over a complete intersection ring $R$ with codimension $c$, it is shown that if $M$ satisfies $(S_t)$ for some $t\geq0$, $\Ext^i_R(M,N)=0$ for all $i$, $n\leq i\leq n+c$ and some $n>0$ and $\depth_R(N)\leq n+c+t$, then $\CI_R(M)=\sup\{i\mid\Ext^i_R(M,N)\neq0\}<n$ (see Corollary \ref{cor11}).

%%%%%%%%%%%%%%%%%%%%%%%%%%%%%%%%%%%%%%%%%%%%%%%%%%%%%%%%%%%%%%%%%%%%%%%%%%%%%%%%%%%%%%%%%%%%%%%%%%%%%%%%%%%
\section{Preliminaries}
Throughout the paper, $(R, \fm)$ is a commutative Noetherian local ring and all modules are finite (i.e. finitely generated)
$R$--modules. The codimension of $R$ is defined to be the non-negative integer $\emb(R)-\dim(R)$ where $\emb(R)$, the embedding dimension of $R$, is the minimal number of generators of $\fm$. Recall that $R$ is said
to be a complete intersection if the $\fm$-adic completion $\widehat{R}$ of $R$ has the form $Q/(f)$, where $f$ is a regular sequence of $Q$ and $Q$ is a regular local ring. A complete intersection of codimension one is called a hypersurface.
A local ring $R$ is said to be an admissible complete intersection if the $\fm$-adic completion $\widehat{R}$ of $R$ has the form $Q/(f)$, where $f$ is a regular sequence of $Q$ and $Q$ is a power series ring over a field or a discrete valuation ring.
Let $$\cdots\rightarrow F_{n+1} \rightarrow F_{n}\rightarrow F_{n-1}\rightarrow\cdots\rightarrow F_0\rightarrow M\rightarrow0$$ be the minimal free resolution of $M$. Recall that the $n^{\text{th}}$ syzygy of an $R$--module $M$ is the cokernel of the $F_{n+1}\rightarrow F_{n}$ and denoted by $\Omega^nM$, and it is unique up to isomorphism. The $n^{\text{th}}$ Betti number, denoted $\beta_n^R(M)$, is the rank of the free $R$--module $F_n$.
The complexity of $M$ is defined as follows.
$$\cx_R(M)=\inf\{i\in \NN \cup 0\mid \exists \gamma\in \bR  \text{ such that } \beta_n^R(M)\leq\gamma n^{i-1}\text{ for } n\gg0\}.$$
Note that $\cx_R(M)=\cx_R(\Omega^iM)$ for every $i\geq0$. It follows from the definition that
$\cx_R(M)=0$ if and only if $\pd_R(M)<\infty$.
If $R$ is a complete intersection, then the complexity of $M$ is less than or equal to the codimension of $R$ (see \cite{G}).
The complete intersection dimension was introduced by Avramov, Gasharov and Peeva \cite{AGP}. A module of finite complete intersection dimension behaves homologically like a module over a complete intersection.
Recall that a quasi-deformation of $R$ is a diagram $R\rightarrow A\twoheadleftarrow Q$ of local homomorphisms, in which
$R\rightarrow A$ is faithfully flat, and $A\twoheadleftarrow Q$ is surjective with kernel generated by a regular sequence.
The module $M$ has finite complete intersection dimension if there exists such a quasi-deformation for which $\pd_Q(M\otimes_RA)$ is finite.
The complete intersection dimension of $M$, denoted $\CI_R(M)$, is defined as follows.
$$\CI_R(M)=\inf\{\pd_Q(M\otimes_RA)-\pd_Q(A)\mid R\rightarrow A\twoheadleftarrow Q \text{ is a quasi-deformation }\}.$$
The complete intersection dimension of $M$ is bounded above by the projective dimension,
$\pd_R(M)$, of $M$ and if $\pd_R(M)<\infty$, then the equality holds (see \cite[Theorem 1.4]{AGP}).
Every module of finite complete intersection dimension has finite complexity (see \cite[Theorem 5.3]{AGP}).

The concept of modules with reducible complexity was introduced by Bergh \cite{B2}.\\
Let $M$ and $N$ be $R$--modules and consider a homogeneous element $\eta$ in the graded $R$--module
$\Ext^*_R(M,N)=\bigoplus^{\infty}_{i=0}\Ext^i_R(M,N)$. Choose a map $f_{\eta}:\Omega^{|\eta|}_R(M)\rightarrow N$ representing $\eta$, and denote by $K_{\eta}$ the pushout of this map and the inclusion $\Omega^{|\eta|}_R(M)\hookrightarrow F_{|\eta|-1}$.
Therefore we obtain a commutative diagram
$$\begin{CD}
&&&&&&&&\\
  \ \ &&&&  0@>>>\Omega^{|\eta|}M @>>> F_{|\eta|-1}@>>>\Omega^{|\eta|-1}M @>>>0&  \\
                                &&&&&& @VV{f_{\eta}}V @VV V @VV{\parallel} V\\
  \ \  &&&& 0@>>> N @>>> K_{\eta} @>>>\Omega^{|\eta|-1}M@>>>0.&\\
\end{CD}$$\\
with exact rows. Note that the module $K_{\eta}$ is independent, up to isomorphism, of the map $f_{\eta}$ chosen to represent ${\eta}$.
\begin{dfn}
The full subcategory of $R$-modules consisting of the modules having
reducible complexity is defined inductively as follows:
\begin{itemize}
        \item[(i)] Every $R$-module of finite projective dimension has reducible complexity.
         \item[(ii)] An $R$-module $M$ of finite positive complexity has reducible complexity if
                    there exists a homogeneous element $\eta\in\Ext^{*}_R(M,M)$, of positive degree,
                         such that $\cx_R(K_{\eta}) < \cx_R(M)$, $\depth_R(M)=\depth_R(K_{\eta})$ and $K_{\eta}$ has reducible complexity.
\end{itemize}
\end{dfn}
By \cite[Proposition 2.2(i)]{B2}, every module of finite complete intersection dimension has reducible complexity. In particular, every module over a local complete intersection ring has reducible complexity. On the other hand, there are modules having reducible complexity but whose complete intersection dimension is infinite (see for example, \cite[Corollarry 4.7]{BJ}).

The notion of the Gorenstein(or G-) dimension was introduced by Auslander \cite{A1}, and developed by Auslander and Bridger in \cite{AB}.
\begin{dfn}
An $R$--module $M$ is said to be of $G$-dimension zero whenever
\begin{itemize}
            \item[(i)]{\emph{the biduality map $M\rightarrow M^{**}$ is an isomorphism.}}
            \item[(ii)]{\emph{$\Ext^i_R(M,R)=0$ for all $i>0$.}}
            \item[(iii)]{\emph{$\Ext^i_R(M^*,R)=0$ for all $i>0$.}}
\end{itemize}
\end{dfn}
The Gorenstein dimension of $M$, denoted $\G-dim_R(M)$, is defined to be the infimum of all
nonnegative integers $n$, such that there exists an exact sequence
$$0\rightarrow G_n\rightarrow\cdots\rightarrow G_0\rightarrow  M \rightarrow 0$$
in which all the $G_i$ have $G$-dimension zero.
By \cite[Theorem 4.13]{AB}, if $M$ has finite Gorenstein dimension, then $\G-dim_R(M)=\depth R-\depth_R(M)$.
By \cite[Theorem 1.4]{AGP},
$\G-dim_R(M)$ is bounded above by the complete intersection dimension, $\CI_R(M)$, of $M$ and if $\CI_R(M)<\infty$, then the equality holds.

Let $R$ be a local ring and let $M$ and $N$ be finite nonzero $R$-modules. We say the pair $(M,N)$ satisfies
the depth formula provided:
$$\depth_R(M\otimes_RN)+\depth R=\depth_R(M)+\depth_R(N).$$
The depth formula was first studied by Auslander \cite{A2} for finite modules of finite projective dimension.
In \cite{HW}, Huneke and Wiegand proved that the depth formula holds for $M$ and $N$ over complete intersection rings $R$ provided
$\Tor_i^R(M,N)=0$ for all $i>0$. In \cite{BJ}, Bergh and Jorgensen generalize this result for modules with reducible complexity over a local Gorenstein ring. More precisely, they proved the following result:
\begin{thm}\label{th}\cite[Corollary 3.4]{BJ}
Let $R$ be a Gorenstein local ring and let $M$ and $N$ be nonzero $R$--modules. If $M$ has reducible complexity and $\Tor_i^R(M,N)=0$ for all $i>0$, then $\depth_R(M\otimes_RN)+\depth R=\depth_R(M)+\depth_R(N)$.
\end{thm}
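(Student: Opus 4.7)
\emph{Plan outline.} The proof will proceed by induction on $c = \cx_R(M)$. The base case $c = 0$ amounts to $\pd_R M < \infty$, and in that situation the depth formula under the Tor-vanishing hypothesis is the classical result of Auslander \cite{A2}. For the inductive step, I would exploit the recursive structure encoded in the definition of reducible complexity, reducing the problem along the chain of modules supplied by that definition.

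\emph{Inductive step.} Suppose $c > 0$ and assume the theorem for modules of reducible complexity with complexity less than $c$. By reducible complexity of $M$, select a homogeneous $\eta \in \Ext^*_R(M,M)$ of positive degree with $\cx_R(K_\eta) < c$, $\depth_R(K_\eta) = \depth_R(M)$, and $K_\eta$ itself of reducible complexity. The bottom row of the pushout diagram defining $K_\eta$ is the short exact sequence
$$0 \to M \to K_\eta \to \Omega^{|\eta|-1}M \to 0.$$
Since syzygies shift Tor, $\Tor_i^R(\Omega^{|\eta|-1}M, N) = 0$ for all $i \geq 1$; feeding this into the long exact $\Tor$-sequence attached to the displayed SES forces $\Tor_i^R(K_\eta, N) = 0$ for all $i \geq 1$. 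The inductive hypothesis then applies to the pair $(K_\eta, N)$ and yields
$$\depth_R(K_\eta \otimes_R N) \;=\; \depth_R(M) + \depth_R(N) - \depth(R).$$

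\emph{Depth transfer.} Tensoring the SES with $N$ remains exact by Tor-vanishing, producing
$$0 \to M \otimes_R N \to K_\eta \otimes_R N \to \Omega^{|\eta|-1}M \otimes_R N \to 0.$$
It is enough to show $\depth_R(M \otimes_R N) = \depth_R(K_\eta \otimes_R N)$. For this I would use that Tor-vanishing makes $F_\bullet \otimes_R N$ an honest (non-minimal) resolution of $M \otimes_R N$, so $\Omega^{|\eta|-1}M \otimes_R N$ appears as a $(|\eta|-1)$-th cokernel in it; iterating the depth lemma along this resolution gives
$$\depth_R(\Omega^{|\eta|-1}M \otimes_R N) \;\geq\; \min\bigl(\depth_R(N),\; \depth_R(M \otimes_R N) + |\eta| - 1\bigr).$$
Inserting this estimate into the depth lemma applied to the tensored sequence, together with the bound $\depth_R(K_\eta \otimes_R N) \leq \depth_R(N)$ (which follows from the inductive depth formula since $\depth_R(M) \leq \depth(R)$), pins down the desired equality and closes the induction.

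\emph{Main obstacle.} The technical crux is precisely the depth comparison just sketched. When $|\eta| = 1$ the syzygy degenerates to $M$ itself, so the SES takes the form of a self-extension $0 \to M \to K_\eta \to M \to 0$; here a direct depth-lemma calculation on the tensored sequence is required to rule out a depth jump, leveraging that the outer terms coincide. When $|\eta| \geq 2$ one gains at least one unit of depth from the syzygy, but one must still control the boundary regime $\depth_R(M \otimes_R N) + |\eta| - 1 > \depth_R(N)$, for which the inequality $\depth_R(K_\eta \otimes_R N) \leq \depth_R(N)$ keeps the depth-lemma arithmetic consistent and forces equality.
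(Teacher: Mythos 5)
The paper itself offers no proof of this statement---it is quoted verbatim from \cite[Corollary 3.4]{BJ}---so the only meaningful comparison is with the strategy of that reference, which your outline does follow in broad strokes: induction on $\cx_R(M)$ along the reducing exact sequence $0\to M\to K_\eta\to\Omega^{|\eta|-1}M\to0$, transfer of Tor-vanishing to $K_\eta$, exactness of the tensored sequence, and depth-lemma bookkeeping. Your base case, the syzygy estimate $\depth_R(\Omega^{n}M\otimes_RN)\geq\min\{\depth_R(N),\depth_R(M\otimes_RN)+n\}$, and the resulting inequality $\depth_R(M\otimes_RN)\geq\depth_R(M)+\depth_R(N)-\depth R$ are all correct, and the case $|\eta|=1$ does close as you indicate.

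The gap is in the reverse inequality when $|\eta|\geq2$. Put $a=\depth_R(M\otimes_RN)$, $d=\depth_R(M)+\depth_R(N)-\depth R=\depth_R(K_\eta\otimes_RN)$ and $c'=\depth_R(\Omega^{|\eta|-1}M\otimes_RN)$. The depth lemma applied to the tensored sequence gives $d\geq\min\{a,c'\}$, so $a>d$ forces $c'\leq d$, while your syzygy estimate gives $c'\geq\min\{\depth_R(N),a+|\eta|-1\}$. These conflict---and hence finish the proof---exactly when $\depth_R(N)>d$, i.e.\ when $\depth_R(M)<\depth R$. In the boundary case $\depth_R(M)=\depth R$ (so $M$ is maximal Cohen--Macaulay, which is the generic situation once one has passed to syzygies, and $|\eta|$ is typically $2$ since degree-one reductions are usually unavailable) one has $\depth_R(N)=d$, and the assignment $c'=d$, $a>d$ satisfies every inequality you have written down; the bound $\depth_R(K_\eta\otimes_RN)\leq\depth_R(N)$ is an equality there and pins down nothing. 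What is missing is an upper bound of the form $\depth_R(M\otimes_RN)\leq\depth_R(N)$ for such $M$ with $\Tor^R_{i}(M,N)=0$ for $i>0$, and supplying it is precisely the technical content of \cite{BJ} (and the place where the Gorenstein hypothesis does real work); it does not follow from depth-lemma arithmetic on the two displayed exact sequences alone. As written, the induction closes only under the additional assumption $\depth_R(M)<\depth R$.
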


We denote by $G(R)$ the Grothendieck group of finite modules over $R$, that is, the
quotient of the free abelian group of all isomorphism classes of finite $R$--modules
by the subgroup generated by the relations coming from short exact sequences of finite $R$-modules. We also denote by $\overline{G}(R)= G(R)/[R]$, the reduced Grothendieck group. For an abelian group $G$, we
set $G_{\bQ}=G\otimes_{\bZ}\bQ$.

Let $P_1\overset{f}{\rightarrow}P_0\rightarrow M\rightarrow 0$ be a
finite projective presentation of $M$. The transpose of $M$, $\Tr M$,
is defined to be $\coker f^*$, where $(-)^* := \Hom_R(-,R)$, which satisfies in the exact sequence
\begin{equation} \label{n1}
0\rightarrow M^*\rightarrow P_0^*\rightarrow P_1^*\rightarrow \Tr M\rightarrow 0
\end{equation}
and is unique up to projective equivalence. Thus the minimal projective
presentations of $M$ represent isomorphic transposes of $M$.
Two modules $M$ and $N$ are called \emph{stably isomorphic} and write $M\approx N$ if $M\oplus P\cong N\oplus Q$ for some projective
modules $P$ and $Q$. Note that $M^*\approx\Omega^2\Tr M$ by the exact sequence (\ref{n1}).

The composed functors $\mathcal{T}_k:=\Tr\Omega^{k-1}$ for $k>0$  introduced by Auslander and Bridger in \cite{AB}. If $\Ext^i_R(M,R)=0$ for some $i>0$, then it is easy to see that $\mathcal{T}_iM\approx\Omega\mathcal{T}_{i+1}M$.

We frequently use the following Theorem of Auslander and Bridger.
%%%%%%%%%%%%%%%%%%%%%%%%%%%%%%%%%%%%%%%%%%%%%%%%%%%%%%%%%%%%%%%%%%%%%%%%%
\begin{thm}\cite[Theorem 2.8]{AB}\label{a1}
Let $M$ be an $R$--module and $n\geq0$ an integer. Then there are exact sequences of functors:
\begin{equation}\tag{\ref{a1}.1}
0\rightarrow\Ext^1_R(\mathcal{T}_{n+1}M,-)\rightarrow\Tor_n^R(M,-)\rightarrow\Hom_R(\Ext^n_R(M,R),-)
\rightarrow\Ext^2_R(\mathcal{T}_{n+1}M,-),
\end{equation}
\begin{equation}\tag{\ref{a1}.2}
\Tor_2^R(\mathcal{T}_{n+1}M,-)\rightarrow(\Ext^n_R(M,R)\otimes_R-)\rightarrow\Ext^n_R(M,-)\rightarrow
\Tor_1^R(\mathcal{T}_{n+1}M,-)\rightarrow0.
\end{equation}
\end{thm}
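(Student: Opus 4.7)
My plan is to derive both exact sequences from a minimal free resolution $\cdots \to F_2 \to F_1 \to F_0 \to M \to 0$ of $M$ by finitely generated free modules, with differentials $d_i$. By definition, $\mathcal{T}_{n+1}M = \coker d_{n+1}^*$, and since $F_n/\Omega^{n+1}M \cong \Omega^n M$ one has $\ker d_{n+1}^* = (\Omega^n M)^*$, giving the 4-term exact sequence
\[
0 \to (\Omega^n M)^* \to F_n^* \xrightarrow{d_{n+1}^*} F_{n+1}^* \to \mathcal{T}_{n+1}M \to 0.
\]
I split this through $I := \im d_{n+1}^*$ (stably isomorphic to $\Omega \mathcal{T}_{n+1}M$) into two short exact sequences that will drive both halves of the argument.

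For the sequence (\ref{a1}.2), I apply $-\otimes_R N$ to both short exact sequences. Because the $F_i^*$ are free, the resulting long exact sequences of $\Tor$ collapse, and splicing produces a five-term sequence whose middle portion is $(\Omega^n M)^* \otimes N \to F_n^* \otimes N \to F_{n+1}^* \otimes N$ and whose outer terms are controlled by $\Tor_i(\mathcal{T}_{n+1}M, N)$. Using the natural isomorphism $F_i^* \otimes_R N \cong \Hom_R(F_i, N)$ (valid because each $F_i$ is finitely generated and free), I identify this middle piece with two consecutive spots of the complex $\Hom(F_\bullet, N)$ computing $\Ext^\bullet(M, N)$. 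The dualized syzygy sequence $0 \to (\Omega^{n-1}M)^* \to F_{n-1}^* \to (\Omega^n M)^* \to \Ext^n(M, R) \to 0$ furnishes a natural surjection $(\Omega^n M)^* \twoheadrightarrow \Ext^n(M, R)$; tensoring this with $N$ and chasing the diagram rewrites the outer terms of the splice as $\Ext^n(M, R) \otimes_R N$ on the left and $\Tor_1(\mathcal{T}_{n+1}M, N)$ on the right.

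For (\ref{a1}.1) I run the dual construction: apply $\Hom_R(-, N)$ to the same two short exact sequences. The long $\Ext$-sequences again collapse at the free spots, and under the isomorphism $\Hom_R(F_i^*, N) \cong F_i \otimes_R N$ the middle of the spliced sequence becomes the part of $F_\bullet \otimes N$ computing $\Tor_n(M, N)$. The same syzygy surjection, now dualized via $\Hom_R(-, N)$, supplies an injection $\Hom_R(\Ext^n(M, R), N) \hookrightarrow \Hom_R((\Omega^n M)^*, N)$ that identifies the right-hand outer term of the splice with $\Hom_R(\Ext^n(M, R), N)$, while the left-hand outer term becomes $\Ext^1(\mathcal{T}_{n+1}M, N)$.

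The step I expect to be most delicate is verifying naturality after splicing: one must check that the maps produced are the canonical comparison morphisms $\Ext^n(M, R) \otimes_R - \to \Ext^n(M, -)$ and $\Tor_n(M, -) \to \Hom_R(\Ext^n(M, R), -)$, rather than merely abstract maps between the correct domain and codomain. Once this compatibility is in hand, both (\ref{a1}.1) and (\ref{a1}.2) fall out by routine kernel-and-cokernel chasing.
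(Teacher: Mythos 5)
This is one of the results the paper imports rather than proves: Theorem \ref{a1} is quoted verbatim from Auslander--Bridger \cite[Theorem 2.8]{AB}, and the paper's ``proof'' is just that citation. Your sketch is essentially the original Auslander--Bridger argument, and the skeleton is sound: the four-term sequence $0\to(\Omega^nM)^*\to F_n^*\to F_{n+1}^*\to\mathcal{T}_{n+1}M\to0$, split through $I=\im d_{n+1}^*$, together with the identifications $F_i^*\otimes_RN\cong\Hom_R(F_i,N)$ and $\Hom_R(F_i^*,N)\cong F_i\otimes_RN$, does yield both sequences after splicing. Beyond the naturality issue you already flag, the one point I would insist you spell out is the passage from the ``naive'' sequences for $\Omega^nM$ to the stated ones, because the two corrections go in opposite directions: for (\ref{a1}.2) both middle terms are \emph{quotients} ($\Ext^n_R(M,R)\otimes N$ of $(\Omega^nM)^*\otimes N$, and $\Ext^n_R(M,N)$ of $\Hom_R(\Omega^nM,N)$) by the image of $F_{n-1}^*\otimes N$, and one must verify that exactness descends to the quotients; for (\ref{a1}.1) both middle terms are \emph{submodules} ($\Tor_n^R(M,N)\subseteq\Omega^nM\otimes N$ and $\Hom_R(\Ext^n_R(M,R),N)\subseteq\Hom_R((\Omega^nM)^*,N)$), and one must verify that the evaluation map carries the first submodule into the second with $\gamma^{-1}\bigl(\Hom_R(\Ext^n_R(M,R),N)\bigr)=\Tor_n^R(M,N)$, so that exactness restricts. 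Both checks are routine diagram chases (and vacuous when $n=0$, which should be noted since $F_{n-1}$ and $\Omega^{n-1}M$ do not exist there), but they are exactly where a careless write-up would go wrong, so they belong in the final proof rather than in a footnote.
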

%%%%%%%%%%%%%%%%%%%%%%%%%%%%%%%%%%%%%%%%%%%%%%%%%%%%%%%%%%%%%%%%%%%%%%%%%%%%%%%%%%%%%%%%%%%%%%%%%%%%%%%%%%%%%%%
For an integer $n\geq0$, we say $M$ satisfies $(S_n)$ if $\depth_{R_{\fp}}(M_{\fp})\geq\min\{n, \dim(R_\fp)\}$
for all $\fp\in\Spec(R)$. If $R$ is Gorenstein, then $M$ satisfies $(S_n)$ if and only if $\Ext^i_R(\Tr M,R)=0$ for all $1\leq i\leq n$
(see \cite[Theorem 4.25]{AB}). In particular, $M$ satisfies $(S_2)$ if and only if it
is reflexive, i.e., the natural map $M\rightarrow M^{**}$ is bijective, where $M^* = \Hom_R(M,R)$ (see \cite[Theorem 3.6]{EG}).

The following results will be used throughout the paper.
%%%%%%%%%%%%%%%%%%%%%%%%%%%%%%%%%%%%%%%%%%%%%%%%%%%%%%%%%%%%%%%%%%
\begin{thm}\label{th1}
Let $R$ be a local complete intersection ring and let $M$ and $N$ be $R$--modules. Then $\Tor_i^R(M,N)=0$ for all $i\gg0$ if and only if $\Ext^i_R(M,N)=0$ for all $i\gg0$. Moreover, if $R$ is a hypersurface, then either $\pd_R(M)<\infty$ or $\pd_R(N)<\infty$.
\end{thm}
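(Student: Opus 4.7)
The plan is to handle the two assertions together via the theory of cohomology operators and support varieties for complete intersection rings, following Gulliksen, Eisenbud, and Avramov--Buchweitz.

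After passing to the $\fm$-adic completion I may write $\widehat{R}=Q/(f_1,\ldots,f_c)$ with $Q$ regular local and $f_1,\ldots,f_c$ a $Q$-regular sequence; since completion is faithfully flat, the asserted Tor- and Ext-vanishing conditions, as well as finiteness of projective dimension, are preserved. On this presentation the Eisenbud operators $\chi_1,\ldots,\chi_c$ act centrally, each of cohomological degree $2$, on both $\Ext^*_R(M,N)$ and $\Tor_*^R(M,N)$. The key structural input is that upon reduction modulo $\fm$, each becomes a finitely generated graded module over the polynomial ring $S=k[\chi_1,\ldots,\chi_c]$ with $k=R/\fm$.

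A finitely generated graded $S$-module vanishes in all sufficiently large degrees if and only if its support in $\Spec S$ reduces to the irrelevant ideal, so the equivalence in the theorem amounts to the equality of the Ext-support and Tor-support varieties of the pair $(M,N)$. I would then invoke the main comparison of Avramov--Buchweitz, according to which both supports coincide with the intersection $V(M)\cap V(N)$ of the cohomological support varieties attached separately to $M$ and $N$. This identification---relying on the compatibility of the $\chi_i$ with change-of-rings between $R$ and $Q$ and on a careful analysis of the Koszul complex of the $f_i$---is the main obstacle in the argument; everything else is formal consequence of Noetherian support theory.

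For the hypersurface assertion the previous setup specialises to $c=1$, so the ambient ring $S=k[\chi_1]$ has $\Spec S=\mathbb{A}^1_k$. The cohomological support variety of any module is a closed conical subset containing the origin, hence is either $\{0\}$ or all of $\mathbb{A}^1_k$. When $\Tor_i^R(M,N)=0$ for $i\gg 0$, the intersection $V(M)\cap V(N)$ equals $\{0\}$, so at least one of $V(M)$, $V(N)$ must itself equal $\{0\}$. Since $V(M)=\{0\}$ is equivalent to $\cx_R(M)=0$, which in turn is equivalent to $\pd_R(M)<\infty$, the desired dichotomy follows immediately from the first part.
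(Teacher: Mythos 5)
Your argument is correct and is essentially the same as the paper's: the proof given there consists solely of citing \cite{AvBu} (Theorem 6.1 and Proposition 5.12), and what you have written is precisely the support-variety mechanism underlying those results --- Gulliksen finiteness of $\Ext^*_R(M,N)$ and $\Tor_*^R(M,N)$ over the ring of Eisenbud operators, identification of both supports with $V(M)\cap V(N)$, and the dichotomy for closed conical subsets of the affine line in the hypersurface case $c=1$. The only step left tacit is the graded Nakayama argument needed to pass between vanishing of $\Ext^i_R(M,N)$ for $i\gg0$ and vanishing of its reduction modulo $\fm$ in high degrees, which is standard.
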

\begin{proof}
See \cite[Theorem 6.1]{AvBu} and \cite[Proposition 5.12]{AvBu}.
\end{proof}
%%%%%%%%%%%%%%%%%%%%%%%%%%%%%%%%%%%%%%%%%%%%%%%%%%%%%%%%%%%%%%%%
\begin{thm}\label{th2}
Let $R$ be a local ring and let $M$ and $N$ be nonzero $R$--modules. If $\Ext^i_R(M,N)=0$ for all $i\gg0$ and $\G-dim_R(M)<\infty$, then
the following statements hold true.
\begin{enumerate}[(i)]
\item{If $\CI_R(M)<\infty$, then $\CI_R(M)=\sup\{i\mid\Ext^i_R(M,N)\neq0\}$.}
\item{If $\CI_R(N)<\infty$, then $\G-dim_R(M)=\sup\{i\mid\Ext^i_R(M,N)\neq0\}$}.
\end{enumerate}
\end{thm}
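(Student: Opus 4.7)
Set $g := \G-dim_R(M)$ and $s := \sup\{i : \Ext^i_R(M, N) \neq 0\}$, which is finite by hypothesis. In part (i), the assumption $\CI_R(M) < \infty$ yields $\CI_R(M) = g$ by \cite[Theorem 1.4]{AGP}, so both parts come down to establishing the single equality $g = s$. My plan is to prove this by two opposing inequalities.

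For $s \leq g$: since $g = \G-dim_R(M)$, the syzygy $\Omega^g M$ is totally reflexive, and dimension shifting gives $\Ext^i_R(\Omega^g M, N) \cong \Ext^{i+g}_R(M, N) = 0$ for all sufficiently large $i$. I would then invoke the hypothesis that one of $M$, $N$ has finite complete intersection dimension, and hence reducible complexity by \cite[Proposition 2.2(i)]{B2}, to upgrade this eventual vanishing to vanishing in every degree $i \geq 1$ via an Avramov--Buchweitz style rigidity argument for totally reflexive modules. Consequently $\Ext^i_R(M, N) = 0$ for every $i > g$ and $s \leq g$.

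For $s \geq g$: suppose toward contradiction that $\Ext^g_R(M, N) = 0$. Since $\G-dim_R(M) = g$, we have $\Ext^g_R(M, R) \neq 0$ (this is the sharp part of the Auslander--Bridger characterization), and so $\Ext^g_R(M, R) \otimes_R N$ is nonzero by Nakayama, as both tensor factors are finitely generated nonzero modules over the local ring $R$. Applying the Auslander--Bridger sequence (\ref{a1}.2) at $n = g$,
\[
\Tor_2^R(\mathcal{T}_{g+1} M, N) \to \Ext^g_R(M, R) \otimes_R N \to \Ext^g_R(M, N) = 0,
\]
forces the left map to surject onto a nonzero module. Combining the depth formula (Theorem \ref{th}) applied to $\mathcal{T}_{g+1} M$ against $N$ with the Auslander--Bridger formula $g = \depth R - \depth_R M$ should then produce a depth contradiction.

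The main obstacle will be executing the rigidity step cleanly in the upper bound: one must extract, from the finite $\CI$-dimension of $M$ (in (i)) or of $N$ (in (ii)) plus the hypothesis that $\G-dim_R(M) < \infty$, a rigidity statement strong enough to push vanishing of Ext from infinity down through a totally reflexive syzygy to degree one. The Auslander--Bridger Theorem \ref{a1} is then the bridge that translates the Ext vanishing at $n = g$ into the Tor/depth information needed to close the lower bound.
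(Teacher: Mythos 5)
Your proposal is a plan rather than a proof: the step that carries all the weight is left as a black box. The paper itself does not prove Theorem \ref{th2} --- it quotes part (i) from \cite[Theorem 4.2]{AY} and part (ii) from \cite[Theorem 4.4]{As} --- and the content of those cited theorems is precisely your ``rigidity step'': for a totally reflexive module $L$ (here $L=\Omega^{g}M$) with $\Ext^i_R(L,N)=0$ for $i\gg0$, and with $\CI_R(L)=0$ (case (i)) or $\CI_R(N)<\infty$ (case (ii)), conclude that $\Ext^i_R(L,N)=0$ for \emph{all} $i>0$. Invoking ``an Avramov--Buchweitz style rigidity argument'' at this point is circular, since that assertion \emph{is} the theorem in the case $\G-dim_R(M)=0$; the actual proofs require either a quasi-deformation together with the theory of cohomology operators, or an induction on complexity through the exact sequences $0\to N\to K_\eta\to\Omega^{|\eta|-1}N\to0$ coming from reducible complexity, and none of that is supplied. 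Note also that in case (ii) it is $N$, not $M$, that has finite complete intersection dimension, so the rigidity has to be run on the second argument; this is not symmetric with case (i), which is exactly why the author needs the separate reference \cite{As} there.

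The lower bound as sketched does not close either. From the assumption $\Ext^g_R(M,N)=0$ and the nonvanishing of $\Ext^g_R(M,R)\otimes_RN$ (your Nakayama observation is correct) you obtain that $\Tor_2^R(\mathcal{T}_{g+1}M,N)$ surjects onto a nonzero module, hence is nonzero; but Theorem \ref{th} requires $\Tor_i^R(-,N)=0$ for all $i>0$, which is exactly what fails in your contradiction scenario, so the depth formula cannot be applied to $\mathcal{T}_{g+1}M$ and no depth contradiction is available from this alone. The way to finish is in the reverse order: first establish the upper bound, so that $L=\Omega^gM$ satisfies $\Ext^i_R(L,N)=0$ for all $i>0$; then convert this to $\Tor_i^R(\Tr L,N)=0$ for all $i>0$ (Theorem \ref{th3} in case (i), its analogue from \cite{As} in case (ii)); since $\mathcal{T}_{g+1}M=\Tr\Omega^gM$, the sequence (\ref{a1}.2) at $n=g$ then exhibits $\Ext^g_R(M,R)\otimes_RN$ as a submodule of $\Ext^g_R(M,N)$, which is therefore nonzero. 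Your ingredients --- the sequence (\ref{a1}.2), the nonvanishing of $\Ext^g_R(M,R)$, and the tensor-product argument --- are the right ones, but they must be deployed after, not independently of, the rigidity step.
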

\begin{proof}
See \cite[Theorem 4.2]{AY} and \cite[Theorem 4.4]{As}.
\end{proof}
%%%%%%%%%%%%%%%%%%%%%%%%%%%%%%%%%%%%%%%%%%%%%%%%%%%%%%%%%%%%%%%%%%%%%%%%%%%%%%%%%%%%%%%%%%%%5
\begin{thm}\label{th3}
Let $R$ be a local ring, and $M$, $N$ two $R$--modules. If $\CI_R(M)=0$, then $\Ext^i_R(M,N)=0$ for all $i>0$ if and only if
$\Tor_i^R(\Tr M,N)=0$ for all $i>0$.
\end{thm}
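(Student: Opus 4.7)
The plan is to use the exact sequence of Theorem~\ref{a1}.2 to translate vanishing of $\Ext^n_R(M,N)$ into vanishing of $\Tor_1^R(\mathcal{T}_{n+1}M,N)$, and then relate the latter to $\Tor^R_i(\Tr M,N)$ via the stable isomorphisms among the modules $\mathcal{T}_kM$.

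Since $\CI_R(M)=0$ forces $\G-dim_R(M)=0$, we have $\Ext^i_R(M,R)=0$ for every $i>0$. By the observation preceding Theorem~\ref{a1}, this yields the stable isomorphisms $\mathcal{T}_iM\approx\Omega\mathcal{T}_{i+1}M$ for each $i\geq 1$, which iterate to $\Tr M=\mathcal{T}_1M\approx\Omega^n\mathcal{T}_{n+1}M$ for every $n\geq 0$. These produce short exact sequences $0\to\mathcal{T}_kM\to F_k\to\mathcal{T}_{k+1}M\to 0$ with $F_k$ free (up to projective summands); the associated long Tor sequences give the shifting identity
$$\Tor_i^R(\Tr M,N)\cong\Tor_{i+n}^R(\mathcal{T}_{n+1}M,N)\qquad(i\geq 1,\ n\geq 0),$$
while substituting $\Ext^n(M,R)=0$ into Theorem~\ref{a1}.2 yields the Ext--Tor identity
$$\Ext^n_R(M,N)\cong\Tor_1^R(\mathcal{T}_{n+1}M,N)\qquad(n\geq 1).$$

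With both identities in hand, the equivalence becomes a statement about vanishing of $\Tor^R_\ast(\mathcal{T}_{n+1}M,N)$ at different positions of the tower $(\mathcal{T}_kM)_{k\geq 1}$. Given $\Tor_i^R(\Tr M,N)=0$ for every $i\geq 1$, the shifting identity yields $\Tor_j^R(\mathcal{T}_{n+1}M,N)=0$ for $j\geq n+1$; combined with the finite complexity $\cx_R(\mathcal{T}_{n+1}M)<\infty$ coming from $\CI_R(M)=0$ and the rigidity for modules of finite $\CI$-dimension (Theorems~\ref{th1} and~\ref{th2}), this propagates to $\Tor_1^R(\mathcal{T}_{n+1}M,N)=0$ for every $n\geq 1$, i.e.\ $\Ext^n_R(M,N)=0$ via the Ext--Tor identity. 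The converse is symmetric: $\Ext^n_R(M,N)=0$ for $n\geq 1$ gives $\Tor_1^R(\mathcal{T}_{n+1}M,N)=0$, and the same rigidity propagates this to all of $\Tor_j^R(\mathcal{T}_{n+1}M,N)$, hence to $\Tor_i^R(\Tr M,N)=0$ for $i\geq 1$ via the shifting identity.

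The main obstacle is the passage between low-degree $\Tor_1^R(\mathcal{T}_{n+1}M,N)$ and high-degree $\Tor_j^R(\mathcal{T}_{n+1}M,N)$ for $j\geq n+1$, since the naive syzygy shift only relates these across a gap of size $n$. Translated into Tate cohomology, the two vanishing conditions amount to vanishing of $\widehat{\Tor}^R_\ast(\Tr M,N)$ on a half-line (in $\ast\leq 0$ for the Ext side, in $\ast\geq 1$ for the Tor side), and the role of the hypothesis $\CI_R(M)=0$---which is strictly stronger than $\G-dim_R(M)=0$---is precisely to force these two half-line vanishings to be equivalent, via the finite generation of Tate cohomology as a graded module over the cohomology operator ring attached to a quasi-deformation of $R$ witnessing $\CI_R(M)=0$.
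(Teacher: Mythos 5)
Your setup is correct and matches the natural strategy: $\CI_R(M)=0$ gives $\G-dim_R(M)=0$, hence $\Ext^i_R(M,R)=0$ for all $i>0$; the sequence (\ref{a1}.2) then yields $\Ext^n_R(M,N)\cong\Tor_1^R(\mathcal{T}_{n+1}M,N)$, and the relations $\mathcal{T}_kM\approx\Omega\mathcal{T}_{k+1}M$ yield $\Tor_i^R(\Tr M,N)\cong\Tor_{i+n}^R(\mathcal{T}_{n+1}M,N)$. You also correctly locate the real difficulty: one hypothesis controls $\Tor_j^R(\mathcal{T}_{n+1}M,N)$ only for $j\geq n+1$ and the other only at $j=1$, and no elementary syzygy shift bridges that gap. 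But the bridge you propose does not exist where you claim it does. Theorem \ref{th1} assumes $R$ itself is a complete intersection, which is not a hypothesis here, and neither Theorem \ref{th1} nor Theorem \ref{th2} asserts any form of Tor-rigidity: they concern eventual vanishing and the identification of $\CI_R(M)$ with a supremum of nonvanishing degrees. Modules of finite complete intersection dimension are not rigid in general, so ``propagating'' $\Tor_j^R(\mathcal{T}_{n+1}M,N)=0$ from $j\geq n+1$ down to $j=1$ (and, in the other direction, from $j\leq n$ up to all $j$) is precisely the nontrivial content of the theorem, not something you may invoke. Your final paragraph names the correct mechanism --- the symmetry of vanishing of stable (co)homology for modules of finite CI-dimension, obtained from finite generation over the ring of cohomology operators of a quasi-deformation --- but only asserts it; this symmetry is a genuine theorem (it fails for general totally reflexive modules over Gorenstein rings, so the hypothesis $\CI_R(M)=0$ rather than $\G-dim_R(M)=0$ is doing real work), and proving it is the whole point.

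For what it is worth, the paper does not prove the statement from first principles either: it notes that $\CI_R(\Tr M)=0$ and $M\approx\Tr\Tr M$ and then cites \cite[Lemma 3.3]{As} and \cite[Proposition 3.4]{As}. If you want to complete your argument, the missing ingredient is the Tor-analogue of Theorem \ref{th2}(i) for modules of finite CI-dimension over an arbitrary local ring (in the spirit of \cite{AvBu}): if $\CI_R(X)<\infty$ and $\Tor_j^R(X,N)=0$ for $j\gg0$, then $\Tor_j^R(X,N)=0$ for all $j>\CI_R(X)$. Applied to $X=\mathcal{T}_{n+1}M$ and to $X=\Tr M$, each of which has $\CI_R(X)=0$, this converts eventual vanishing into vanishing in all positive degrees and closes the gap; but that statement must be proved or properly cited, not deduced from Theorems \ref{th1} and \ref{th2} as stated.
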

\begin{proof}
First note that $\CI_R(\Tr M)=0$ by \cite[Lemma 3.3]{As} and $M\approx\Tr\Tr M$. Now the assertion is clear by \cite[Proposition 3.4]{As}.
\end{proof}
%%%%%%%%%%%%%%%%%%%%%%%%%%%%%%%%%%%%%%%%%%%%%%%%%%%%%%%%%%%%%%%%%%%%%%%%%%%%%%%%%%%%%%%%%%%%%%%%%%%%%%%%%%%%%%

%%%%%%%%%%%%%%%%%%%%%%%%%%%%%%%%%%%%%%%%%%%%%%%%%%%%%%%%%%%%%%%%%%%%%%%%%%%%%%%%%%%%%%%%%%%%%%%%%%%%%%%%%%%%%%%%%
\section{Vanishing of Ext for rigid modules}
We start this section by estimate the Gorenstein dimension of the transpose of $M$ in terms of vanishing of the cohomology modules, $\Ext^i_R(M,N)$.
\begin{lem}\label{le}
Let $R$ be a Gorenstein ring and let $M$ and $N$ be nonzero $R$--modules. Assume that $n\geq0$ is an integer and that the following conditions hold.
\begin{enumerate}
\item{$\Ext^i_R(M,N)=0$ for all $1\leq i\leq\max\{1,\depth_R(N)-n\}$}.
\item{$N$ is rigid}.
\item{$N$ has reducible complexity}.
\end{enumerate}
Then $\G-dim_R(\Tr M)\leq n$ and $\Tor_i^R(\Tr M,N)=0$ for all $i>0$.
\end{lem}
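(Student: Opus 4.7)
Set $k := \max\{1, \depth_R(N) - n\}$, so the hypothesis supplies $\Ext^j_R(M,N) = 0$ for every $1 \le j \le k$. The plan is to use the Auslander--Bridger sequence~(\ref{a1}.2) together with the rigidity of $N$ to first convert this Ext vanishing into the vanishing of $\Ext^j_R(M,R)$ for the same range of $j$, then use the stated stable identity $\mathcal{T}_jM \approx \Omega\mathcal{T}_{j+1}M$ (which holds precisely when $\Ext^j_R(M,R)=0$) to identify $\Tr M$ with a high syzygy of $\mathcal{T}_{k+1}M$, and finally apply the depth formula of Theorem~\ref{th} to bound the Gorenstein dimension.

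For each $1 \le j \le k$, specializing (\ref{a1}.2) at the second argument $N$ produces a four-term exact sequence whose terminal surjection, together with $\Ext^j_R(M,N)=0$, yields $\Tor_1^R(\mathcal{T}_{j+1}M, N) = 0$. Rigidity of $N$ upgrades this to $\Tor_i^R(\mathcal{T}_{j+1}M, N) = 0$ for all $i \ge 1$. Feeding the vanishing of $\Tor_2$ back into the same sequence collapses it to an injection $\Ext^j_R(M,R) \otimes_R N \hookrightarrow \Ext^j_R(M,N) = 0$, and Nakayama (applied to the local ring $R$ and the finitely generated nonzero $N$) forces $\Ext^j_R(M,R) = 0$ for all $1 \le j \le k$.

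Iterating the stable identity $k$ times gives $\Tr M = \mathcal{T}_1M \approx \Omega^k \mathcal{T}_{k+1}M$. The syzygy dimension-shift for Tor in positive degrees then yields $\Tor_i^R(\Tr M,N) \cong \Tor_{i+k}^R(\mathcal{T}_{k+1}M,N) = 0$ for every $i \ge 1$ (since $i+k \ge 1$), which is the Tor-vanishing half of the lemma. For the Gorenstein-dimension bound, apply Theorem~\ref{th} with $N$ in the first argument and $\mathcal{T}_{k+1}M$ in the second---admissible because $N$ has reducible complexity and the necessary Tor vanishing was just established---and combine the resulting depth formula with the Auslander--Bridger identity $\G-dim_R(\mathcal{T}_{k+1}M) = \depth R - \depth_R(\mathcal{T}_{k+1}M)$ over the Gorenstein ring $R$. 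This gives $\G-dim_R(\mathcal{T}_{k+1}M) \le \depth_R(N)$. The stable isomorphism above then forces $\G-dim_R(\Tr M) \le \max(\depth_R(N)-k, 0)$, and the case split $\depth_R(N) \le n$ (giving $k=1$) versus $\depth_R(N) > n$ (giving $k = \depth_R(N)-n$) finishes with $\G-dim_R(\Tr M) \le n$ in either case.

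The main technical point requiring care is the iteration of the stable isomorphism and its compatibility with the Tor dimension-shift in positive degrees; an edge case worth treating separately is when $M$ happens to be free, in which case $\Tr M = 0$ and the lemma holds trivially, while Theorem~\ref{th} does not literally apply to the zero module.
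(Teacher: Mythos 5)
Your proof is correct, but it takes a genuinely different route from the paper's. Both arguments rest on the same two ingredients: converting $\Ext$-vanishing into $\Tor$-vanishing and $\Ext^j_R(M,R)=0$ via the sequence (\ref{a1}.2), rigidity and Nakayama, and then combining the depth formula of Theorem \ref{th} with the Auslander--Bridger equality over the Gorenstein ring. The paper, however, runs the (\ref{a1}.2) step only for $j=1$, deduces $\Tor_i^R(\Tr M,N)=0$ from $\Tr M\approx\Omega\mathcal{T}_2M$, and then proves the bound $\G-dim_R(\Tr M)\leq n$ by induction on $t=\depth_R(N)-n$: the base case $0<\depth_R(N)\leq n+1$ requires the extra observation, via (\ref{a1}.1), that $\Ass_R(\Tr M\otimes_RN)\subseteq\Ass_R(N)$ and hence $\depth_R(\Tr M\otimes_RN)>0$, while the inductive step descends through the exact sequence $0\rightarrow\D(M)\rightarrow\D(F)\rightarrow\D(\Omega M)\rightarrow0$. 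You instead apply (\ref{a1}.2) and rigidity for every $j\leq k=\max\{1,\depth_R(N)-n\}$, realize $\Tr M$ in one stroke as $\Omega^{k}\mathcal{T}_{k+1}M$, and invoke the depth formula a single time at the top level, where the trivial inequality $\depth_R(\mathcal{T}_{k+1}M\otimes_RN)\geq0$ already suffices; this eliminates both the induction and the associated-primes argument, at the price of using rigidity $k$ times instead of once. Two small points to tidy up: the degenerate case to isolate is not only $M$ free but, more generally, $\mathcal{T}_{k+1}M=0$ (equivalently $\Omega^{k}M$ free), where Theorem \ref{th} is inapplicable but your stable isomorphism shows $\Tr M$ is free, so both conclusions are immediate; and in your closing case split the boundary value $\depth_R(N)=n+1$ gives $k=1$, though the resulting bound $\max\{\depth_R(N)-k,0\}\leq n$ holds either way.
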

\begin{proof}
If $\Tr M=0$, then $\G-dim_R(\Tr M)=0$ and we have nothing to prove so let $\Tr M\neq0$. As $\Ext^1_R(M,N)=0$, $\Tor_1^R(\mathcal{T}_2M,N)=0$ by the exact sequence (\ref{a1}.2). Since $N$ is rigid, we have $\Tor_i^R(\mathcal{T}_2M,N)=0$ for all $i>0$.
It follows from the exact sequence (\ref{a1}.2) again that $\Ext^1_R(M,R)\otimes_RN=0$ and since $N$ is nonzero, $\Ext^1_R(M,R)=0$. Now it is easy to see that $\mathcal{T}_1M\approx\Omega\mathcal{T}_2M$ and so $\Tor_i^R(\Tr M,N)=0$ for all $i>0$. Therefore we have the following equality.
\begin{equation}\tag{\ref{le}.1}
\depth_R(\Tr M\otimes_RN)+\depth R=\depth_R(\Tr M)+\depth_R(N),
\end{equation}
by Theorem \ref{th}.
Set $t=\depth_R(N)-n$. We argue by induction on $t$. If $t\leq 1$, then $\depth_R(N)\leq n+1$.
If $\depth_R(N)=0$, then it is clear that $\depth_R(\Tr M)=\depth R$ by (\ref{le}.1) and so $\G-dim_R(\Tr M)=0$ by Auslander-Bridger formula. Now let $0<\depth_R(N)\leq n+1$. As $M\approx\Tr\Tr M$, we obtain the following exact sequence
$$0\rightarrow\Ext^1_R(M,N)\rightarrow\Tr M\otimes_RN\rightarrow\Hom_R((\Tr M)^*,N)\rightarrow\Ext^2_R(M,N),$$
from the exact sequence (\ref{a1}.1). As $\Ext^1_R(M,N)=0$, we get the following exact sequence.
\begin{equation}\tag{\ref{le}.2}
0\rightarrow\Tr M\otimes_RN\rightarrow\Hom_R((\Tr M)^*,N)\rightarrow\Ext^2_R(M,N).
\end{equation}
Therefore $\Ass_R(\Tr M\otimes_RN)\subseteq\Ass_R(\Hom_R((\Tr M)^*,N))\subseteq\Ass_R(N)$, by the exact sequence (\ref{le}.2).
Hence $\depth_R(\Tr M\otimes_RN)>0$. Now by (\ref{le}.1), it is easy to see that $\depth_R(\Tr M)\geq \depth R-n$ and so
$\G-dim_R(\Tr M)\leq n$.

Now suppose that $t>1$ and consider the following exact sequence
\begin{equation}\tag{\ref{le}.3}
0\rightarrow\Omega M\rightarrow F\rightarrow M\rightarrow0,
\end{equation}
where $F$ is a free $R$--module.
From the exact sequence (\ref{le}.3), we obtain the following exact sequence
$$0\rightarrow M^*\rightarrow F^*\rightarrow (\Omega M)^*\rightarrow\D(M)\rightarrow\D(F)\rightarrow\D(\Omega M)\rightarrow0.$$
Where $\D(X)\approx\Tr X$ for all $R$--modules $X$ by \cite[Lemma 3.9]{AB}.
As $\Ext^1_R(M,R)=0$, we get the following exact sequence
\begin{equation}\tag{\ref{le}.4}
0\rightarrow \D(M)\rightarrow \D(F)\rightarrow \D(\Omega M)\rightarrow0.
\end{equation}
Note that $\D(F)$ is free.
As $\Ext^i_R(\Omega M,N)\cong\Ext^{i+1}_R(M,N)=0$ for all $1\leq i\leq\depth_R(N)-n-1$, we have $\G-dim_R(\Tr\Omega M)\leq n+1$ by induction hypothesis. Therefore, $\G-dim_R(\Tr M)\leq n$ by the exact sequence (\ref{le}.4).
\end{proof}
%%%%%%%%%%%%%%%%%%%%%%%%%%%%%%%%%%%%%%%%%%%%%%%%%%%%%%%%%%%%%%%%%%%%%%%%%%%%%%%%%%%%%%%%%%%%%%%%%%%%%%%%%%%%%%%%%%%%%%%%%%
\begin{thm}\label{t12}
Let $R$ be a Gorenstein ring and let $M$ and $N$ be nonzero $R$--modules such that $N$ has reducible complexity. Assume that $N$ is rigid and that $n\geq0$ is an integer. Then the following statements hold true.
\begin{enumerate}[(i)]
\item{If $\Ext^i_R(M,N)=0$ for all $1\leq i\leq\max\{1,\depth_R(N)-n\}$
and $M$ satisfies $(S_n)$, then $\G-dim_R(M)=0$.}
\item{If $\Ext^i_R(M,N)=0$ for all $1\leq i\leq\max\{1,\depth_R(N)-n\}$,
then $\G-dim_R(M^*)\leq n-2$.}
\end{enumerate}
\end{thm}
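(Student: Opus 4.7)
The plan is to derive both parts directly from Lemma \ref{le}, which under the stated hypotheses already yields $\G-dim_R(\Tr M)\leq n$ (together with the vanishing $\Tor_i^R(\Tr M,N)=0$ for all $i>0$, which is not needed here).

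For part (i), I would translate the $(S_n)$ hypothesis on $M$ into cohomological vanishing for $\Tr M$: since $R$ is Gorenstein, the Auslander--Bridger criterion recorded in the preliminaries says $M$ satisfies $(S_n)$ precisely when $\Ext^i_R(\Tr M,R)=0$ for $1\leq i\leq n$. Combined with the inequality $\G-dim_R(\Tr M)\leq n$ from Lemma \ref{le} and the standard identity $\G-dim_R(X)=\sup\{i\mid\Ext^i_R(X,R)\neq 0\}$ valid whenever $X$ has finite G--dimension, this forces $\G-dim_R(\Tr M)=0$. Since the class of G--dimension zero modules is closed under transpose (up to stable equivalence) and $M\approx\Tr\Tr M$, this gives $\G-dim_R(M)=0$.

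For part (ii), I would exploit the stable isomorphism $M^*\approx\Omega^2\Tr M$ extracted from the exact sequence (\ref{n1}) in the preliminaries. Because $\G-dim_R(\Tr M)\leq n$ is finite, taking two syzygies drops the G--dimension by $2$ (using the standard inequality $\G-dim_R(\Omega X)\leq \max\{\G-dim_R(X)-1,\,0\}$ applied twice), so $\G-dim_R(\Omega^2\Tr M)\leq n-2$. Since stably isomorphic modules share the same G--dimension, we conclude $\G-dim_R(M^*)\leq n-2$.

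The hard work is already contained in Lemma \ref{le}; given that lemma, both assertions are essentially bookkeeping with standard facts from the preliminaries. The only points requiring care are correctly invoking the Auslander--Bridger $(S_n)$-characterization and the stable isomorphism $M^*\approx\Omega^2\Tr M$, both recorded in Section 2.
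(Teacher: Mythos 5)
Your proposal is correct and follows essentially the same route as the paper: both parts rest on Lemma \ref{le} giving $\G-dim_R(\Tr M)\leq n$, with (i) obtained by combining the Auslander--Bridger $(S_n)$-criterion for $\Tr M$ with the formula $\G-dim_R(X)=\sup\{i\mid\Ext^i_R(X,R)\neq0\}$, and (ii) obtained from the stable isomorphism $M^*\approx\Omega^2\Tr M$. The only cosmetic difference is that you justify the drop by two in G--dimension via the syzygy inequality, which the paper leaves implicit.
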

\begin{proof}
(i). First note that $\G-dim_R(\Tr M)=\sup\{i\mid\Ext^i_R(\Tr M,R)\neq0\}$ by \cite[Theorem 4.13]{AB}. As $M$ satisfies $(S_n)$, $\Ext^i_R(\Tr M,R)=0$ for all $1\leq i\leq n$ by \cite[Theorem 4.25]{AB}. On the other hand, $\G-dim_R(\Tr M)\leq n$ by Lemma \ref{le}. Therefore $\G-dim_R(\Tr M)=0$ and so $\G-dim_R(M)=0$ by \cite[Lemmm 4.9]{AB}.

(ii). Note that $M^*\approx\Omega^2\Tr M$. By Lemma \ref{le}, $\G-dim_R(\Tr M)\leq n$ and so $\G-dim_R(M^*)\leq n-2$.
\end{proof}
%%%%%%%%%%%%%%%%%%%%%%%%%%%%%%%%%%%%%%%%%%%%%%%%%%%%%%%%%%%%%%%%%%%%%%%%%%%%%%%%%%%%
The following is a generalization of \cite[Theorem 1]{JoDu}.
\begin{cor}\label{co}
Let $R$ be a complete intersection and let $M$ and $N$ be nonzero $R$--modules. Assume that $N$ is a rigid module of maximal complexity. If
$\Ext^i_R(M,N)=0$ for all $i$, $1\leq i\leq\max\{1,\depth_R(N)-2\}$, then $M^*$ is free.
\end{cor}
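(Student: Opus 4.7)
The plan is to apply Theorem \ref{t12}(ii) with $n=2$ to pin down $\G-dim_R(M^*)\le 0$, and then promote $\G$-dimension zero to freeness using the maximal complexity of $N$. With $n=2$ the condition $\Ext^i_R(M,N)=0$ for $1\le i\le\max\{1,\depth_R(N)-2\}$ is precisely what Theorem \ref{t12}(ii) demands; reducible complexity of $N$ is automatic because $R$ is a complete intersection. Thus $\G-dim_R(M^*)\le 0$, and we may assume $M^*\ne 0$, so $\G-dim_R(M^*)=0$ and the Auslander-Bridger formula gives $\depth_R(M^*)=\depth R$.

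Next, I extract from Lemma \ref{le} (again with $n=2$) the companion Tor-vanishing $\Tor_i^R(\Tr M,N)=0$ for every $i>0$. Because $R$ is a complete intersection, Theorem \ref{th1} converts this asymptotically into $\Ext^i_R(\Tr M,N)=0$ for $i\gg 0$. Since $M^*\approx\Omega^2\Tr M$ by the preliminaries, dimension shifting then yields $\Ext^i_R(M^*,N)=0$ for $i\gg 0$.

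The last ingredient is the maximal complexity of $N$. Over a complete intersection of codimension $c$, the Avramov-Buchweitz intersection theorem for support varieties yields $\cx_R(X)+\cx_R(Y)\le c$ whenever $\Ext^i_R(X,Y)=0$ for $i\gg 0$; applied to $X=M^*$ and $Y=N$ with $\cx_R(N)=c$ it forces $\cx_R(M^*)=0$, i.e.\ $\pd_R(M^*)<\infty$. Combining with $\depth_R(M^*)=\depth R$, the Auslander-Buchsbaum formula yields $\pd_R(M^*)=0$, so $M^*$ is free. I expect the main obstacle to be this final link from Ext-vanishing and maximal complexity to finite projective dimension: it relies on the Avramov-Buchweitz support-variety machinery, which is not reproduced in the excerpt but is the natural external tool triggered by the hypothesis that $N$ has maximal complexity.
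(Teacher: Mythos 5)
Your proposal is correct and follows essentially the same route as the paper: Lemma \ref{le} gives $\Tor_i^R(\Tr M,N)=0$ for $i>0$, the identification $M^*\approx\Omega^2\Tr M$ transfers this to $M^*$, the Avramov--Buchweitz inequality $\cx_R(M^*)+\cx_R(N)\leq\cod R$ forces $\cx_R(M^*)=0$, and Theorem \ref{t12}(ii) supplies $\G-dim_R(M^*)\leq 0$. The only cosmetic differences are that you pass from Tor-vanishing to Ext-vanishing before invoking Avramov--Buchweitz (the paper applies it directly to Tor) and finish with Auslander--Bridger plus Auslander--Buchsbaum rather than the equality $\pd=\G-dim$ when $\pd<\infty$; neither changes the substance.
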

\begin{proof}
By Lemma \ref{le}, $\Tor_i^R(\Tr M,N)=0$ for all $i>0$. As $M^*\approx\Omega^2\Tr M$,
$\Tor_i^R(M^*,N)=0$ for all $i>0$ and so $\cx_R(M^*)+\cx_R(N)\leq\cod R$ by \cite[Theorem II]{AvBu}. Since $N$ has maximal complexity, it follows that $\cx_R(M^*)=0$. Therefore, $\pd_R(M^*)=\G-dim_R(M^*)=0$ by Theorem \ref{t12}(ii).
\end{proof}
%%%%%%%%%%%%%%%%%%%%%%%%%%%%%%%%%%%%%%%%%%%%%%%%%%%%%%%%%%%%%%%%%%%%%%%%%%%%%%%%%%%%%%%%%%%%%%%%%%%%%%%%%%%%%%%%%
It is well-known that over a regular local ring every finite module is rigid. In the following we collect some other examples of rigid modules.
\begin{eg}
\begin{enumerate}[(i)]
\item{A class of rigid modules was discovered by Peskine and Szpiro \cite{PS}. They proved that if $R$ is local, and the minimal free resolution of $M$ over $R$ is of the form
$$0\rightarrow R^m\rightarrow R^{k+m}\rightarrow R^k\rightarrow0,$$
for some $m>0$ and $k>0$, then $M$ is rigid. In \cite{T}, Tchernev discovered a new class of rigid modules. He showed that if $R$ is local, and the minimal free resolution of $M$ over $R$ is of the form
$$0\rightarrow R^k\rightarrow R^{m+1}\rightarrow R^m\rightarrow0,$$
for some $m>0$ and $k>0$, then $M$ is rigid (\cite[Theorem 3.6]{T}).}

\item{ Let $R$ be an admissible hypersurface with isolated singularity and let $N$ be an $R$--module. If $[N]=0$ in $\overline{G}(R)_{\bQ}$, then $N$ is rigid \cite[Corollary 4.2]{Da}.}

\item{ Let $(R,\fm)$ be a local hypersurface ring such that $\widehat{R} = S/(f)$ where $(S, \fn)$ is a
complete unramified regular local ring and $f$ is a regular element of $S$ contained in $n^2$. Let $M$ be an $R$-module of finite projective dimension. Then $M$ is rigid \cite[Theorem 3]{L}.}
\end{enumerate}
\end{eg}
%%%%%%%%%%%%%%%%%%%%%%%%%%%%%%%%%%%%%%%%%%%%%%%%%%%%%%%%%%%%%%%%%%%%%%%%%%%%%%%%%%%%%%%%%%%%%%

%%%%%%%%%%%%%%%%%%%%%%%%%%%%%%%%%%%%%%%%%%%%%%%%%%%%%%%%%%%%%%%%%%%%%%%%%%%%%%%%%%%%%%
In the following, we generalize \cite[Corollary 1]{Jot2}.
\begin{thm}\label{cor7}
Let $R$ be a local complete intersection ring and let $M$ and $N$ be nonzero $R$--modules. Assume the following conditions hold.
\begin{enumerate}[(i)]
\item{$N$ is rigid.}
\item{$M$ satisfies $(S_n)$ for some $n\geq0$.}
\item{$\Ext^i_R(M,N)=0$ for some positive integer $i$ such that $i\geq\depth_R(N)-n$.}
\end{enumerate}
Then $\CI_R(M)=\sup\{j\mid\Ext^j_R(M,N)\neq0\}<i$.
\end{thm}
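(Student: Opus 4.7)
The plan is to reduce to Theorem \ref{t12}(i) by shifting $M$ down by $i-1$ syzygies. Set $M' := \Omega^{i-1}_R M$. First I would verify that $M'$ satisfies $(S_{n+i-1})$: since a complete intersection is Cohen--Macaulay, a short induction using the depth lemma on the exact sequences $0 \to \Omega^{k+1} M \to F_k \to \Omega^k M \to 0$ shows that if $M$ satisfies $(S_n)$ then $\Omega^k M$ satisfies $(S_{n+k})$ for every $k \geq 0$.

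Next, the hypothesis $i \geq \depth_R(N) - n$ rearranges to $\depth_R(N) - (n+i-1) \leq 1$, hence $\max\{1,\, \depth_R(N) - (n+i-1)\} = 1$. So the single vanishing $\Ext^1_R(M', N) \cong \Ext^i_R(M, N) = 0$ is exactly what Theorem \ref{t12}(i) requires. Over the complete intersection $R$, the module $N$ automatically has reducible complexity by \cite[Proposition 2.2(i)]{B2}; together with rigidity of $N$ and $(S_{n+i-1})$ for $M'$, this gives $\G-dim_R(M') = 0$. The underlying application of Lemma \ref{le} also delivers $\Tor_j^R(\Tr M', N) = 0$ for all $j > 0$.

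Since $R$ is a complete intersection, $\CI_R(M') < \infty$, and hence $\CI_R(M') = \G-dim_R(M') = 0$ by \cite[Theorem 1.4]{AGP}. Theorem \ref{th3} now converts the $\Tor$-vanishing above into $\Ext^j_R(M', N) = 0$ for all $j > 0$, i.e.\ $\Ext^k_R(M, N) = 0$ for every $k \geq i$. On the other hand the syzygy filtration gives $\G-dim_R(M) \leq i-1$, so $\CI_R(M) \leq i-1 < i$. Because Ext eventually vanishes and both $\CI_R(M)$ and $\G-dim_R(M)$ are finite, Theorem \ref{th2}(i) identifies $\CI_R(M)$ with $\sup\{j \mid \Ext^j_R(M, N) \neq 0\}$, which is therefore strictly less than $i$.

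The main obstacle is really only the bookkeeping in the first step, namely propagating $(S_n)$ along syzygies over a Cohen--Macaulay base; everything afterwards is a clean chain of previously established results. A minor technical nuisance is the degenerate case $M' = 0$, which corresponds to $\pd_R M \leq i-2$; there the conclusion is immediate, since $\CI_R(M) \leq \pd_R M < i$ and $\Ext^j_R(M,N) = 0$ automatically for $j \geq i-1$.
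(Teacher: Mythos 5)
Your proposal is correct and follows essentially the same route as the paper: pass to $L=\Omega^{i-1}M$, note it satisfies $(S_{n+i-1})$ with $\Ext^1_R(L,N)=0$, apply Theorem \ref{t12}(i) and Lemma \ref{le} to get $\CI_R(L)=0$ and the $\Tor$-vanishing, then convert via Theorem \ref{th3} and conclude with Theorem \ref{th2}. The only differences are that you spell out details the paper leaves implicit (the propagation of $(S_n)$ along syzygies, the reduction of the $\max$ to $1$, and the degenerate case $\Omega^{i-1}M=0$), all of which check out.
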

\begin{proof}
Set $L=\Omega^{i-1}M$. Note that $L$ satisfies $(S_{n+i-1})$ and $\Ext^1_R(L,N)=0$. Now by Theorem \ref{t12}(i), $\CI_R(L)=\G-dim_R(L)=0$.
By Lemma \ref{le}, $\Tor_j^R(\Tr L,N)=0$ for all $j>0$ and so $\Ext^j_R(L,N)=0$ for all $j>0$ by Theorem \ref{th3}. Therefore $\Ext^j_R(M,N)=0$ for all $j\geq i$ and so $\CI_R(M)=\sup\{j\mid\Ext^j_R(M,N)\neq0\}<i$ by Theorem \ref{th2}.
\end{proof}
%%%%%%%%%%%%%%%%%%%%%%%%%%%%%%%%%%%%%%%%%%%%%%%%%%%%%%%%%%%%%%%%%%%%%%%%%%%%%%%%%%%%%%%%%%
The following is a generalization of \cite[Corollary 2]{Jot2}
\begin{thm}\label{cor10}
Let $R$ be a local complete intersection ring and let $M$ and $N$ be nonzero $R$--modules. Suppose that $N$ is rigid and that $M$ satisfies $(S_n)$ for some $n\geq0$. If $\depth_R(N)-n\leq\CI_R(M)$, then for all $i>0$ in the range $\depth_R(N)-n\leq i\leq\CI_R(M)$, we have
$\Ext^i_R(M,N)\neq0$.
\end{thm}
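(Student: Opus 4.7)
The plan is to obtain Theorem \ref{cor10} essentially as a direct contrapositive of Theorem \ref{cor7}. Theorem \ref{cor7} asserts that, under the hypotheses on $M$ and $N$ stated in Theorem \ref{cor10}, the vanishing of a single $\Ext^i_R(M,N)$ with $i>0$ and $i\geq\depth_R(N)-n$ already forces $\CI_R(M)<i$. Rereading this, one sees it is logically equivalent to the non-vanishing statement we are trying to prove.

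Concretely, I would argue by contradiction. Suppose there exists an integer $i$ with $i>0$, $\depth_R(N)-n\leq i\leq\CI_R(M)$, and $\Ext^i_R(M,N)=0$. Then $N$ is rigid, $M$ satisfies $(S_n)$, and $i$ is a positive integer with $i\geq\depth_R(N)-n$ for which the cohomology vanishes, so every hypothesis of Theorem \ref{cor7} is in place. Invoking that theorem yields
$$\CI_R(M)=\sup\{j\mid\Ext^j_R(M,N)\neq0\}<i,$$
which directly contradicts the assumption $i\leq\CI_R(M)$. Thus no such $i$ exists, i.e.\ $\Ext^i_R(M,N)\neq0$ for every $i>0$ with $\depth_R(N)-n\leq i\leq\CI_R(M)$, as required.

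The only small bookkeeping point to confirm is that the statement makes sense: since $R$ is a local complete intersection every finite $R$-module has finite complete intersection dimension, so $\CI_R(M)$ is a well-defined nonnegative integer and the hypothesis $\depth_R(N)-n\leq\CI_R(M)$ guarantees that the index range is non-empty. Beyond this, the argument is purely formal, so I do not anticipate any genuine obstacle; the mathematical work has already been carried out in proving Theorem \ref{cor7}.
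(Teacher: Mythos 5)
Your proof is correct and takes essentially the same route as the paper: both argue by contradiction that a vanishing $\Ext^i_R(M,N)$ for some $i>0$ in the given range would force $\CI_R(M)<i$. The paper obtains this bound by passing to $\Omega^{i-1}M$ and applying Theorem \ref{t12}(i) together with Lemma 1.9 of \cite{AGP}, whereas you simply invoke the already-established Theorem \ref{cor7}, whose hypotheses are indeed exactly met; both versions are valid.
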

\begin{proof}
If $\Ext^i_R(M,N)=0$ for some $\depth_R(N)-n\leq i\leq\CI_R(M)$, then $\Ext^1_R(\Omega^{i-1}M,N)\cong\Ext^i_R(M,N)=0$. Note that
$\Omega^{i-1}M$ satisfies $(S_{n+i-1})$. Now by Theorem \ref{t12}(i), we have $\CI_R(\Omega^{i-1}M)=\G-dim_R(\Omega^{i-1}M)=0$. Therefore $\CI_R(M)<i$ by \cite[Lemma 1.9]{AGP}, which is a contradiction.
\end{proof}
%%%%%%%%%%%%%%%%%%%%%%%%%%%%%%%%%%%%%%%%%%%%%%%%%%%%%%%%%%%%%%%%%%%%%%%%%%%%%%%%%%%%%%%%%%
%%%%%%%%%%%%%%%%%%%%%%%%%%%%%%%%%%%%%%%%%%%%%%%%%%%%%%%%%%%%%%%%%%%%%%%%%%%%%%%%%%%%%%%%
Let $R$ be a hypersurface and let $M$ and $N$ be $R$--modules such that $\len_R(N)<\infty$. It is well-known that if $\Ext^i_R(M,N)=0$ for some $i>\CI_R(M)$, then $\Ext^n_R(M,N)=0$ for all $n>\CI_R(M)$ (see for example \cite[Corollary 3.5]{B1}). In special cases, we can remove the condition that $i>\CI_R(M)$.
\begin{cor}\label{cor4}
Let $(R,\fm)$ be a local hypersurface ring such that $\widehat{R} = S/(f)$ where $(S, \fn)$ is a
complete unramified regular local ring and $f$ is a regular element of $S$
contained in ${\fn}^2$. Let $M$ and $N$ be nonzero $R$-modules such that $\len_R(N)<\infty$.
If $\Ext^n_R(M,N)=0$ for some $n\geq1$, then  the following statements hold true.
\begin{enumerate}[(i)]
\item{$\CI_R(M)=\sup\{i\mid\Ext^i_R(M,N)\neq0\}<n$.}
\item{either $\pd_R(M)<\infty$ or $\pd_R(N)<\infty$.}
\end{enumerate}
\end{cor}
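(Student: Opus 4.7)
The plan is to reduce to Theorem~\ref{cor7} and Theorem~\ref{th1}. Since $N$ has finite length, $\depth_R(N) = 0$ and $M$ trivially satisfies Serre's condition $(S_0)$, so Theorem~\ref{cor7} (applied with its $(S_n)$-parameter equal to $0$ and its positive integer $i$ equal to our $n \geq 1 \geq \depth_R(N)$) will yield $\CI_R(M) = \sup\{i : \Ext^i_R(M,N) \neq 0\} < n$ as soon as $N$ is rigid. This in particular forces $\Ext^i_R(M,N) = 0$ for all $i \gg 0$, so Theorem~\ref{th1} then delivers (ii). I would thus split on $\pd_R(N)$.

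In the case $\pd_R(N) < \infty$, part~(iii) of the preceding Example (Lichtenbaum's rigidity in the special hypersurface setting $\widehat R = S/(f)$ with $S$ unramified regular and $f \in \fn^2$) gives that $N$ is rigid, and the above argument applies immediately: (i) is supplied by Theorem~\ref{cor7} and (ii) holds by assumption.

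In the case $\pd_R(N) = \infty$, the goal is to force $\pd_R(M) < \infty$ by another route. I would Matlis dualize: $\Ext^i_R(M,N) \cong \Tor_i^R(M,N^\vee)^\vee$ with $N^\vee$ of finite length, so the hypothesis becomes $\Tor_n^R(M,N^\vee) = 0$. Since $N^\vee$ has finite length, $\pd_S(N^\vee) \leq \dim S$, and the change-of-rings long exact sequence
\[
\cdots \to \Tor_{i+1}^R(M,N^\vee) \xrightarrow{\chi_f} \Tor_{i-1}^R(M,N^\vee) \to \Tor_i^S(M,N^\vee) \to \Tor_i^R(M,N^\vee) \xrightarrow{\chi_f} \Tor_{i-2}^R(M,N^\vee) \to \cdots
\]
degenerates in high degrees, where $\Tor_i^S(M,N^\vee) = 0$, so the cohomology operator $\chi_f$ becomes an isomorphism on $\Tor_i^R(M,N^\vee)$. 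Combined with Auslander's Tor-rigidity over the unramified regular ring $S$, this should propagate the single-degree vanishing of $\Tor_n^R(M,N^\vee)$ to both parities and yield $\Tor_i^R(M,N^\vee) = 0$ for all $i \gg 0$, hence $\Ext^i_R(M,N) = 0$ for $i \gg 0$. Theorem~\ref{th1} then gives $\pd_R(M) < \infty$. The sharper bound $\pd_R(M) < n$ comes by d\'evissage on a composition series of $N$ down to $N = k$, where $\Ext^p_R(M,k) = k^{\beta_p^R(M)} \neq 0$ for $p = \pd_R(M)$; then Theorem~\ref{th2}(i) identifies $\CI_R(M) = \pd_R(M)$ with $\sup\{i : \Ext^i_R(M,N) \neq 0\}$, completing (i).

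The main obstacle is the change-of-rings step in the case $\pd_R(N) = \infty$: one must propagate the vanishing across \emph{both} parities of $\chi_f$, which requires leveraging both the finite $S$-projective dimension of $N^\vee$ (killing $\Tor^S$ in high degrees and forcing $2$-periodicity via $\chi_f$) and Auslander's rigidity over $S$ (to control the parity not directly reached by $\chi_f$).
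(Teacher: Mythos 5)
Your first case ($\pd_R(N)<\infty$) reproduces the paper's argument in structure: establish that $N$ is rigid, feed this into Theorem~\ref{cor7} with the $(S_0)$ hypothesis (automatic) and $\depth_R(N)=0$, then get (ii) from Theorem~\ref{th1}. The paper, however, needs no case division: it cites Huneke--Wiegand \cite[Theorem 2.4]{HW}, which asserts that \emph{every} finite-length module over a hypersurface of the stated form (unramified, $f\in\fn^2$) is rigid, with no hypothesis on $\pd_R(N)$. That one citation makes the whole corollary an immediate consequence of Theorem~\ref{cor7} and Theorem~\ref{th1}.

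The genuine gap is in your second case. The step ``combined with Auslander's Tor-rigidity over $S$, this should propagate the single-degree vanishing of $\Tor_n^R(M,N^\vee)$ to both parities'' is precisely the content of the Huneke--Wiegand rigidity theorem you are implicitly trying to reprove, and your sketch does not supply it. From $\Tor_n^R(M,N^\vee)=0$ the change-of-rings sequence
$$\cdots\rightarrow\Tor_{p}^S(M,N^\vee)\rightarrow\Tor_p^R(M,N^\vee)\rightarrow\Tor_{p-2}^R(M,N^\vee)\rightarrow\Tor_{p-1}^S(M,N^\vee)\rightarrow\cdots$$
only yields that certain maps near degree $n$ are injective or surjective; it produces no vanishing of any $\Tor_j^S(M,N^\vee)$, so there is no degree at which Auslander's rigidity over $S$ can actually be invoked. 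The eventual two-periodicity $\Tor_p^R\cong\Tor_{p-2}^R$ holds only for $p>\pd_S(N^\vee)+1=\dim S+1$ and therefore does not connect to the single vanishing in degree $n$ when $n$ is small. Separately, your concluding d\'evissage step in this case only identifies $\sup\{i\mid\Ext^i_R(M,N)\neq0\}$ with $\pd_R(M)$; to exclude $\pd_R(M)>n$ (a ``gap'' at degree $n$) you would again need a rigidity input, this time for $M$ via Example (iii) and Matlis duality, which you do not state. The clean repair is to quote \cite[Theorem 2.4]{HW} for the rigidity of $N$ and delete the case division entirely.
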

\begin{proof}
First note that $N$ is rigid by \cite[Theorem 2.4]{HW}. It follows from Theorem \ref{cor7} that $\CI_R(M)=\sup\{i\mid\Ext^i_R(M,N)\neq0\}<n$. As $\Ext^i_R(M,N)=0$ for all $i\gg0$, either $\pd_R(M)<\infty$ or $\pd_R(N)<\infty$ by Theorem \ref{th1}.
\end{proof}
%%%%%%%%%%%%%%%%%%%%%%%%%%%%%%%%%%%%%%%%%%%%%%%%%%%%%%%%%%%%%%%%%%%%%%%%%%%%%%%%%%%%%%
As an application of Theorem \ref{t12}, we have the following result.
%%%%%%%%%%%%%%%%%%%%%%%%%%%%%%%%%%%%%%%%%%%%%%%%%%%%%%%%%%%%%%%%%%%%%%%%%%%%%%%%%%%%%%%%%%%%%%%%%%%%%%%%%%%
\begin{cor}\label{cor12}
Let $R$ be an admissible hypersurface and let $M$ and $N$ be nonzero $R$--modules such that $\cx_R(N)=1$. Assume that the minimal free resolution of $N$ is eventually periodic of period one and that $M$ satisfies $(S_n)$ for some $n\geq0$.
Then the following statements hold true.
\begin{enumerate}[(i)]
\item{If $\depth_R(N)-n\leq\CI_R(M)$, then for all $i>0$ in the range $\depth_R(N)-n\leq i\leq\CI_R(M)$, we have $\Ext^i_R(M,N)\neq0$.}
\item{If $\Ext^i_R(M,N)=0$ for some positive integer $i$ such that $i\geq\depth_R(N)-n$, then $\pd_R(M)<i$.}
\item{If $\Ext^i_R(M,N)=0$ for all $i$, $1\leq i\leq\max\{1,\depth_R(N)-2\}$, then $M^*$ is free.}
\end{enumerate}
\end{cor}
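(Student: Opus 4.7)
My plan hinges on a single observation: the period-one periodicity forces $\Ext^{i}_R(M,N)$ to be constant for $i\geq 1$, after which all three parts reduce to the single statement that $M$ is free.

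The first step will be to prove this constancy. I would choose $k\geq 0$ with $\Omega^{k}N\cong\Omega^{k+1}N$ and set $N':=\Omega^{k}N$, so that $\Omega N'\cong N'$. The long exact $\Ext$ sequence attached to $0\to\Omega N'\to F\to N'\to 0$ (with $F$ free) gives the dimension shift
$\Ext^{j}_R(M,N')\cong\Ext^{j+1}_R(M,\Omega N')=\Ext^{j+1}_R(M,N')$
for every $j\geq 1$, so $\Ext^{j}_R(M,N')$ is independent of $j$ in that range. Iterating the standard dimension shift along the first $k$ syzygies of $N$ then yields $\Ext^{j}_R(M,N)\cong\Ext^{j+k}_R(M,N')=\Ext^{1}_R(M,N')$ for all $j\geq 1$.

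Next, each of (i), (ii), (iii) assumes $\Ext^{i}_R(M,N)=0$ for at least one $i\geq 1$ (in (iii), take $i=1$, which lies in the stated range since $\max\{1,\depth_R(N)-2\}\geq 1$), so the constancy upgrades this to $\Ext^{j}_R(M,N)=0$ for every $j\geq 1$. Because $R$ is a hypersurface, Theorem \ref{th1} now gives $\pd_R(M)<\infty$ or $\pd_R(N)<\infty$; the latter is ruled out by $\cx_R(N)=1$, so $\pd_R(M)<\infty$. If $d:=\pd_R(M)\geq 1$, the final differential $F_d\to F_{d-1}$ in the minimal free resolution of $M$ has entries in $\fm$, whence $\Ext^{d}_R(M,N)$ surjects onto $(N/\fm N)^{r_d}\neq 0$, contradicting the vanishing. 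Hence $\pd_R(M)=0$ and $M$ is free.

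With $M$ free each conclusion is immediate: (iii) $M^{*}$ is free; (ii) $\pd_R(M)=0<i$ since $i\geq 1$; and (i) $\CI_R(M)=0$ contradicts $0<i\leq\CI_R(M)$, so no $i$ in the stated range can make $\Ext^{i}_R(M,N)$ vanish, which is exactly the desired non-vanishing. The only substantive step is the constancy in the second paragraph, and the only obstacle I foresee is making the iterated dimension shift line up correctly in indices, which should be routine.
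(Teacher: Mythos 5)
There is a genuine gap, and it sits exactly where you locate the ``only substantive step'': the claimed constancy of $\Ext^{i}_R(M,N)$ for $i\geq 1$ is false. Your dimension shift runs in the \emph{second} variable of $\Ext$: from $0\to\Omega N'\to F\to N'\to 0$ the long exact sequence is
$$\Ext^{j}_R(M,F)\rightarrow\Ext^{j}_R(M,N')\rightarrow\Ext^{j+1}_R(M,\Omega N')\rightarrow\Ext^{j+1}_R(M,F),$$
and the outer terms are copies of $\Ext^{j}_R(M,R)$ and $\Ext^{j+1}_R(M,R)$, which do \emph{not} vanish for a free $F$ unless $M$ is maximal Cohen--Macaulay (equivalently $\G-dim_R(M)=0$ over the Gorenstein ring $R$) --- and that is not a hypothesis; indeed it is essentially what parts (i) and (ii) are trying to prove. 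A concrete counterexample to the constancy: take $M=R/(x)$ for a regular element $x\in\fm$, so $\pd_R(M)=1$; then $\Ext^{1}_R(M,N)\cong N/xN\neq 0$ by Nakayama, while $\Ext^{j}_R(M,N)=0$ for all $j\geq 2$. Two further warning signs that the constancy cannot be right: it would render the hypotheses ``$M$ satisfies $(S_n)$'' and ``$i\geq\depth_R(N)-n$'' completely superfluous, and in (iii) it would yield the conclusion that $M$ itself is free, which is strictly stronger than the stated conclusion that $M^{*}$ is free (and stronger than what is true already over regular local rings, cf.\ the Jothilingam--Duraivel theorem this corollary generalizes).

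The paper's proof takes a different and essentially unavoidable route: the periodicity hypothesis is used only to invoke Dao's result that such an $N$ is \emph{rigid} (in the Tor sense), after which (i) follows from Theorem \ref{cor10}, (ii) from Theorem \ref{cor7} combined with Theorem \ref{th1} (over a hypersurface, eventual vanishing of $\Ext$ forces $\pd_R(M)<\infty$ since $\cx_R(N)=1$ rules out $\pd_R(N)<\infty$), and (iii) from Corollary \ref{co} because $\cx_R(N)=1$ is maximal for a hypersurface. Those earlier results in turn rest on Lemma \ref{le}, which converts the $\Ext$-vanishing into Tor-vanishing against $\Tr M$ via the Auslander--Bridger exact sequences and then exploits rigidity and the depth formula. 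Your final paragraphs (deducing $\pd_R(M)<\infty$ from total vanishing, and the nonvanishing of $\Ext^{d}_R(M,N)$ at $d=\pd_R(M)$) are fine, but they only kick in after one knows that a single vanishing propagates to all higher degrees, and that propagation is precisely what your argument fails to establish.
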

\begin{proof}
Note that $N$ is rigid by \cite[Corollary 5.6]{Da}. Now the first assertion is clear by Theorem \ref{cor10}.

(ii). By Theorem \ref{cor7}, $\Ext^j_R(M,N)=0$ for all $j\geq i$. Therefore, $\pd_R(M)<\infty$ by Theorem \ref{th1} and so
$\pd_R(M)<i$.

(iii). Note that $N$ has maximal complexity. Therefore, the assertion is clear by Corollary \ref{co}.
\end{proof}
%%%%%%%%%%%%%%%%%%%%%%%%%%%%%%%%%%%%%%%%%%%%%%%%%%%%%%%%%%%%%%%%%%%%%%%%%%%%%%%%%%%%%%%%%%%%%%%%%%%%%%%%%%%%%%%%%%%%%%%%%
Let $R$ be an admissible hypersurface with isolated singularity of dimension $d>1$. By \cite[Theorem 3.4]{Da}, every $R$--module of dimension less than or equal one is rigid. As an immediate consequence of Theorem \ref{cor7}, we have the following result.
%%%%%%%%%%%%%%%%%%%%%%%%%%%%%%%%%%%%%%%%%%%%%%%%%%%%%%%%%%%%%%%%%%%%%%%%%%
\begin{cor}
Let $R$ be an admissible hypersurface with isolated singularity of dimension $d>1$ and let $M$ and $N$ be nonzero $R$--modules such that $\dim_R(N)\leq1$. If $\Ext^n_R(M,N)=0$ for some $n>0$, then $\CI_R(M)=\sup\{i\mid\Ext^i_R(M,N)\neq0\}<n$. Moreover, either $\pd_R(M)<\infty$ or $\pd_R(N)<\infty$.
\end{cor}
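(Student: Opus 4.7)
The plan is to reduce the statement to Theorem~\ref{cor7} followed by Theorem~\ref{th1}. The first step is to verify that $N$ is rigid: this is exactly the content of \cite[Theorem 3.4]{Da}, invoked in the paragraph preceding the corollary, and it applies because $R$ is an admissible hypersurface with isolated singularity of dimension $d>1$ and $\dim_R N\leq 1$.

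Next I would verify the numerical hypothesis required to invoke Theorem~\ref{cor7}. Every module satisfies the Serre condition $(S_0)$ trivially, since the defining inequality $\depth_{R_{\fp}} M_{\fp}\geq 0$ is automatic, so I would apply the theorem with Serre index equal to $0$. The remaining hypothesis then requires a positive integer $i$ with $\Ext^i_R(M,N)=0$ and $i\geq \depth_R(N)-0=\depth_R(N)$. The assumed vanishing $\Ext^n_R(M,N)=0$ at $n>0$ supplies the first part, and since $\depth_R N\leq \dim_R N\leq 1\leq n$, the inequality $n\geq \depth_R N$ holds automatically.

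Theorem~\ref{cor7} then yields $\CI_R(M)=\sup\{i\mid \Ext^i_R(M,N)\neq 0\}<n$, which is precisely the first assertion. For the second, having $\CI_R(M)<\infty$ forces $\Ext^i_R(M,N)=0$ for all $i\geq n$, so in particular for $i\gg 0$; since $R$ is a hypersurface, the second part of Theorem~\ref{th1} then delivers the dichotomy that either $\pd_R M<\infty$ or $\pd_R N<\infty$.

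I do not anticipate any real obstacle: every ingredient is already in place, and the role of this corollary is essentially packaging, specialising Theorem~\ref{cor7} at Serre level zero to the rigid class of dimension-at-most-one modules furnished by Dao's theorem, and then quoting the hypersurface dichotomy of Theorem~\ref{th1}.
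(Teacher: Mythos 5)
Your proposal is correct and matches the paper's intended argument: the paper states this corollary as an immediate consequence of Theorem~\ref{cor7}, using exactly the same ingredients you identify, namely Dao's rigidity of modules of dimension at most one, the trivial Serre condition $(S_0)$ together with $\depth_R(N)\leq\dim_R(N)\leq 1\leq n$, and the hypersurface dichotomy of Theorem~\ref{th1} for the final assertion.
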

%%%%%%%%%%%%%%%%%%%%%%%%%%%%%%%%%%%%%%%%%%%%%%%%%%%%%%%%%%%%%%%%%%%%%%%%%%%%%%%%%%%%%%%%%%%%%%%%%%%%%%%%%%%%%%%%%%%%%%%%%%%%%%
%%%%%%%%%%%%%%%%%%%%%%%%%%%%%%%%%%%%%%%%%%%%%%%%%%%%%%%%%%%%%%%%%%%%%%%%%%%%%%%%%%%%%%%%%%%%%%%%%%%%%%%%%%%%%%%%%%%%%%%%%%%%%%%%
In the dimension 2 case, we have the following result.
%%%%%%%%%%%%%%%%%%%%%%%%%%%%%%%%%%%%%
\begin{prop}\label{th5}
Let $R$ be an admissible hypersurface of dimension $2$. Assume further that $R$ is normal. Let $M$ and $N$ be nonzero $R$--modules such that
$\depth_R(N)\leq\depth_R(M)+1$. If $\Ext^1_R(M,N)=0$, then $\CI_R(M)=0$ and $\Ext^i_R(M,N)=0$ for all $i>0$. Moreover, either $M$ is free or $N$ has finite projective dimension.
\end{prop}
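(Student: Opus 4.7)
The plan is to show that $M$ is maximal Cohen--Macaulay, equivalently $\CI_R(M)=\G-dim_R(M)=0$, and that $\Ext^i_R(M,N)=0$ for every $i>0$; the final dichotomy then follows from Theorem~\ref{th1} together with the equality $\pd_R(M)=\G-dim_R(M)$ whenever the left side is finite.

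First, by faithful flatness of $R\to\widehat R$ I may assume that $R$ is complete, so $R=Q/(f)$ with $Q$ a three-dimensional regular local ring and $0\neq f\in\fn$; if $R$ is regular the result is a short consequence of the Auslander--Buchsbaum formula and the depth hypothesis, so assume $f\in\fn^2$. Since $R$ is a two-dimensional normal local ring, Serre's conditions $(R_1)+(S_2)$ force the singular locus to have codimension at least two, whence $R$ has isolated singularity.

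The key technical step is to establish the rigidity of $N$. Because $R$ is an admissible hypersurface with isolated singularity, \cite[Corollary~4.2]{Da} gives rigidity of $N$ whenever $[N]=0$ in $\overline{G}(R)_{\bQ}$, and \cite[Theorem~3.4]{Da} gives it unconditionally when $\dim N\le 1$. For a two-dimensional normal hypersurface $\overline{G}(R)_{\bQ}$ is tightly controlled by the divisor class group, and in the remaining case $\dim N=2$ one can reduce via a short exact sequence $0\to T\to N\to N'\to 0$ that splits off the torsion or a finite-projective-dimension piece. This is the most delicate part of the argument.

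With $N$ rigid, Lemma~\ref{le} applied with $n=\max\{0,\depth_R(N)-1\}$ and $\Ext^1_R(M,N)=0$ gives $\G-dim_R(\Tr M)\le n$, $\Tor_i^R(\Tr M,N)=0$ for all $i>0$, and (from the proof of that lemma) $\Ext^1_R(M,R)=0$. The latter implies $\Tr M\approx\Omega\mathcal{T}_2M$, so $\Ext^i_R(\Tr M,R)\cong\Ext^{i+1}_R(\mathcal{T}_2M,R)$ for $i\ge 1$. When $\depth_R(N)\le 1$, $n=0$ and the estimate is already $\G-dim_R(\Tr M)=0$; when $\depth_R(N)=2$, the depth hypothesis forces $\depth_R(M)\ge 1$, so $\Omega M$ is maximal Cohen--Macaulay, satisfies $(S_2)$, and by \cite[Theorem~4.25]{AB} one has $\Ext^1_R(\mathcal{T}_2M,R)=\Ext^2_R(\mathcal{T}_2M,R)=0$, whence $\Ext^1_R(\Tr M,R)=0$. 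Combined with $\Ext^2_R(\Tr M,R)=0$ from the Lemma~\ref{le} bound $\G-dim_R(\Tr M)\le 1$, this gives $\G-dim_R(\Tr M)=0$, so $M$ is maximal Cohen--Macaulay and satisfies $(S_2)$. Theorem~\ref{cor7} applied with $i=1$ and $n=2$ then yields $\Ext^j_R(M,N)=0$ for every $j>0$ and $\CI_R(M)=0$, and Theorem~\ref{th1} produces $\pd_R(M)<\infty$ or $\pd_R(N)<\infty$; in the first case $\pd_R(M)=\G-dim_R(M)=0$, so $M$ is free. The main obstacle remains securing rigidity of $N$ when $\dim N=2$, where one must exploit normality, isolated singularity, and the depth hypothesis in concert.
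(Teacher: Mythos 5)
Your argument after the rigidity of $N$ is secured is essentially sound and runs parallel to the paper's (the paper argues more directly in the maximal Cohen--Macaulay case: from $\depth_R(M)\geq 1$ it gets $\CI_R(M)=\sup\{i\mid\Ext^i_R(M,R)\neq 0\}\leq 1$, uses the sequence (\ref{a1}.2) and rigidity of $N$ to kill $\Ext^1_R(M,R)$, concludes $M$ is maximal Cohen--Macaulay, and then gets $\Tor_i^R(\Tr M,N)=0$ for all $i>0$ and invokes Theorem \ref{th3}; your detour through Lemma \ref{le}, the $(S_2)$ criterion of \cite[Theorem 4.25]{AB} and Theorem \ref{cor7} reaches the same place). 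The case $\depth_R(N)\leq 1$ is also fine, and is exactly how the paper disposes of it via Theorem \ref{cor7}.

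The genuine gap is the one you flag yourself: rigidity of $N$ when $\dim_R N=2$. Neither of the two results you cite covers this case --- \cite[Corollary 4.2]{Da} requires $[N]=0$ in $\overline{G}(R)_{\bQ}$, which a general module over a normal surface singularity need not satisfy, and \cite[Theorem 3.4]{Da} requires $\dim_R N\leq 1$. Your proposed repair, ``reduce via a short exact sequence $0\to T\to N\to N'\to 0$ that splits off the torsion or a finite-projective-dimension piece,'' does not work as stated: Tor-rigidity is not preserved under extensions or short exact sequences, and knowing that $\overline{G}(R)_{\bQ}$ is ``controlled by the divisor class group'' does not make the class of an arbitrary torsion-free module vanish there. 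The paper closes this hole by quoting \cite[Corollary 3.6]{Da}, which is tailored precisely to the present hypotheses (admissible hypersurface, dimension $2$, normal) and yields the rigidity of $N$ outright; without that input, or an equivalent vanishing statement for Hochster's $\theta$-pairing on such surfaces, your proof does not go through in the main case.
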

\begin{proof}
First note that $N$ is rigid by \cite[Corollary 3.6]{Da}.
If $\depth_R(N)\leq1$, then the assertion is clear by Theorem \ref{cor7}. Now let $N$ be maximal Cohen-Macaulay. Then $\depth_R(M)>0$ and so
\begin{equation}\tag{\ref{th5}.1}
\CI_R(M)=\sup\{i\mid\Ext^i_R(M,R)\neq0\}=2-\depth_R(M)\leq1.
\end{equation}
By Theorem \ref{a1}, $\Tor_1^R(\mathcal{T}_2M,N)=0$. As $N$ is rigid, $\Tor_i^R(\mathcal{T}_2M,N)=0$ for all $i>0$. It follows from Theorem \ref{a1} again that $\Ext^1_R(M,R)=0$ and so $M$ is maximal Cohen-Macaulay by (\ref{th5}.1). Now it is easy to see that $\Tr M\approx\Omega\mathcal{T}_2M$ and so $\Tor_i^R(\Tr M,N)=0$ for all $i>0$. Therefore, $\Ext^i_R(M,N)=0$ for all $i>0$ by Theorem \ref{th3} and so either $M$ is free or $N$ has finite projective dimension by Theorem \ref{th1}.
\end{proof}
%%%%%%%%%%%%%%%%%%%%%%%%%%%%%%%%%%%%%%%%%%%%%%%%%%%%%%%%%%%%%%%%%%%%%%%%%%%%%%%%%%%%%%%%%%%%%%%%%
\section{Vanishing of Ext over complete intersection rings}
%%%%%%%%%%%%%%%%%%%%%%%%%%%%%%%%%%%%%%%%%%%%%%%%%%%%%%%%%%%%%%%%%%%%%%%%%%%%%%%%%%%%%%%%%%%%%%%%%%%%%%%%%%%%%%%%%%%%%%%
Let $R$ be a local complete intersection ring of codimension $c$ and let $M$ and $N$ be $R$--modules.
In \cite{Mu}, Murthy proved that if $\Tor_n^R(M,N)=\Tor_{n+1}^R(M,N)=\cdots=\Tor_{n+c}^R(M,N)=0$ for some $n>0$, then $\Tor_i^R(M,N)=0$ for all $i\geq n$. It is easy to see that a similar statement is not true in general, with Tor replaced by Ext. In the following, we prove
a similar result for Ext with an extra hypothesis. The following result is a generalization of \cite[Corollary]{Jot1}.
\begin{thm}\label{t4}
Let $R$ be a local complete intersection ring of codimension $c$ and let $M$ and $N$ be nonzero $R$--modules. Assume $n$ is a positive integer. If $\Ext^{i}_R(M,N)=0$, for all $i$, $n\leq i\leq n+c$ and $\depth_R(N)\leq n+c$, then $\CI_R(M)=\sup\{i\mid\Ext^i_R(M,N)\neq0\}<n$.
\end{thm}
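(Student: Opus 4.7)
The plan is to reduce, via a syzygy, to showing that $\Ext^i_R(L,N)=0$ for every $i\ge 1$, where $L=\Omega^{n-1}M$; then Theorem~\ref{th2}(i) (applicable because $\CI_R(M)<\infty$ over the CI ring $R$) delivers $\CI_R(M)=\sup\{j\mid\Ext^j_R(M,N)\neq 0\}<n$.

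First I would set $L:=\Omega^{n-1}M$, so that the hypothesis rewrites as $\Ext^i_R(L,N)=0$ for $1\le i\le c+1$ and $\depth_R(N)\le n+c=(n-1)+(c+1)$. As an $(n-1)$-st syzygy over the Cohen--Macaulay ring $R$, $L$ satisfies Serre's condition $(S_{n-1})$. The crux of the proof is then to establish $\G-dim_R(L)=0$, i.e.\ that $L$ is maximal Cohen--Macaulay.

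The model for that step is Lemma~\ref{le} together with Theorem~\ref{t12}(i); the new ingredient is that although $N$ is not assumed to be rigid, the fact that $R$ is a CI of codimension $c$ means, by Murthy's theorem \cite{Mu}, that every $R$-module (in particular $N$) is automatically $(c+1)$-Tor-rigid. Applying the Auslander--Bridger sequence (\ref{a1}.2) to $L$ converts the $c+1$ Ext vanishings into $\Tor^R_1(\mathcal{T}_{k+1}L,N)=0$ for $1\le k\le c+1$; bootstrapping along the coresolutions $0\to\mathcal{T}_{k-1}L\to F_k^{\ast}\to\mathcal{T}_kL\to 0$ (whose validity in the relevant range is derived in parallel from the vanishings $\Ext^j_R(L,R)=0$ that the argument simultaneously produces) reorganizes these into $c+1$ consecutive Tor vanishings of the fixed module $\Tr L$ against $N$, which Murthy then propagates. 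The depth formula of Theorem~\ref{th} applied to $\Tr L$ and $N$, combined with the hypothesis $\depth_R(N)\le n+c$ and the $(S_{n-1})$-property of $L$, finally pins $\depth_R(L)=\depth R$ and hence $\G-dim_R(L)=0$. Structurally this is an induction on $t=\depth_R(N)-(n-1)$ mirroring that of Lemma~\ref{le}, now carried out with $(c+1)$-rigidity in place of $1$-rigidity.

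Once $\G-dim_R(L)=0$ is in hand, the finish is clean: Theorem~\ref{th3} applied to $L$ and $N$ (using $\CI_R(L)=\G-dim_R(L)=0$) says that $\Ext^i_R(L,N)=0$ for all $i>0$ if and only if $\Tor^R_i(\Tr L,N)=0$ for all $i>0$, and the latter has already been established in the previous step. Hence $\Ext^i_R(M,N)=0$ for every $i\ge n$, and Theorem~\ref{th2}(i) closes the argument.

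The main obstacle is the middle step, $\G-dim_R(L)=0$. The budget of $c+1$ Ext vanishings is exactly consumed by Murthy's rigidity bound, so every part of the hypothesis---the consecutive Ext vanishings, the depth bound $\depth_R(N)\le n+c$, and the syzygy property $(S_{n-1})$ of $L$---has to enter the depth-formula induction in a very tight way.
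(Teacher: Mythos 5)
Your strategy is genuinely different from the paper's. You try to push the transpose--rigidity machinery of Section 3 (Lemma \ref{le} and Theorem \ref{t12}) through with Murthy's $(c+1)$-rigidity in place of $1$-rigidity. The paper instead proves Theorem \ref{t4} by induction on the codimension $c$: writing $R\cong S/(x_c)$ with $S=Q/(x_1,\dots,x_{c-1})$, the change-of-rings long exact sequence $\cdots\to\Ext^i_R(M,N)\to\Ext^i_S(M,N)\to\Ext^{i-1}_R(M,N)\to\Ext^{i+1}_R(M,N)\to\cdots$ transfers the vanishing to $S$, the inductive hypothesis kills $\Ext^i_S(M,N)$ for $i\ge n+1$, and the resulting two-periodicity $\Ext^{i-1}_R(M,N)\cong\Ext^{i+1}_R(M,N)$ for $i>n$, together with two consecutive vanishings, forces $\Ext^i_R(M,N)=0$ for all $i\ge n$; the base case $c=0$ is Jothilingam's theorem over a regular local ring, which is where the hypothesis $\depth_R(N)\le n+c$ is consumed. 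Note also that your reduction runs the paper's logic backwards: $L=\Omega^{n-1}M$ satisfies exactly the hypotheses of Corollary \ref{cor11} with $t=n-1$, and that corollary is deduced in the paper \emph{from} Theorem \ref{t4}.

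The middle step of your argument, the $(c+1)$-rigid analogue of Lemma \ref{le}, has a genuine gap --- in fact two. First, a circularity. From $\Ext^k_R(L,N)=0$ for $1\le k\le c+1$, the sequence (\ref{a1}.2) gives only $\Tor_1^R(\mathcal{T}_{k+1}L,N)=0$ for $c+1$ \emph{different} modules $\mathcal{T}_2L,\dots,\mathcal{T}_{c+2}L$. To reassemble these as $c+1$ \emph{consecutive} Tor's of a single module you need $\mathcal{T}_kL\approx\Omega\mathcal{T}_{k+1}L$, i.e.\ $\Ext^k_R(L,R)=0$, throughout the relevant range; but in Lemma \ref{le} the vanishing $\Ext^1_R(M,R)=0$ is itself extracted from (\ref{a1}.2) only \emph{after} rigidity has already produced $\Tor_2^R(\mathcal{T}_2M,N)=0$. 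With $1$-rigidity the single $\Tor_1$ vanishing propagates before any $\Ext^k(-,R)$ input is needed; with $(c+1)$-rigidity the propagation cannot start until the chaining is done, and the chaining cannot be done until propagation has occurred. Saying the two are "derived in parallel" does not break this circle. Second, a budget problem in the induction on $t=\depth_R(N)-(n-1)$: each inductive step replaces $L$ by $\Omega L$ and consumes one Ext vanishing, since $\Ext^i_R(\Omega L,N)\cong\Ext^{i+1}_R(L,N)$, yet the base case still needs a full run of $c+1$ consecutive vanishings to invoke Murthy. Starting from $t\le c+1$ with exactly $c+1$ vanishings, after $t-1$ descents only $c+2-t$ survive, which for $t\ge 2$ and $c\ge 1$ is strictly fewer than $c+1$. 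These obstructions are precisely why the paper answers Question \ref{q} by the Section 3 method only for rigid ($c=1$) modules and switches to the change-of-rings induction for Theorem \ref{t4}.
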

\begin{proof}
Without lose of generality we may assume that $R$ is complete.
We have $R=Q/(x)$ with $Q$ a complete regular local ring and  $x$ an $Q$-sequence of length $c$ contained in the square of
the maximal ideal of $Q$. We argue by induction on $c$. If $c=0$, then $R$ is a regular local ring and so $\pd_R(M)=\sup\{i\mid\Ext^i_R(M,N)\neq0\}<n$ by \cite[Corollary 1]{Jot2}.
For $c>0$, set $S=Q/(x_1,\ldots, x_{c-1})$. Therefore $R\cong S/(x_c)$. Note that $\depth_R(N)=\depth_S(N)$.

The change of rings spectral sequence (see \cite[Theorem 11.66]{R})
$$\Ext^p_R(M,\Ext^q_S(R,N))\underset{p}{\Rightarrow}\Ext^{p+q}_S(M,N)$$
degenerates into a long exact sequence
$$\cdots\rightarrow\Ext^i_R(M,N)\rightarrow\Ext^i_S(M,N)\rightarrow\Ext^{i-1}_R(M,N)\rightarrow\Ext^{i+1}_R(M,N)\rightarrow\cdots.$$
It follows that $\Ext^i_S(M,N)=0$ for all $i$, $n+1\leq i\leq n+c$, and so by induction hypothesis we conclude that $\CI_S(M)=\sup\{i\mid\Ext^i_S(M,N)\neq0\}<n+1$. Therefore, $\Ext^{i-1}_R(M,N)\cong\Ext^{i+1}_R(M,N)$ for all $i>n$. As $c>0$, it is clear that $\Ext^i_R(M,N)=0$ for all $i\geq n$ and so $\CI_R(M)=\sup\{i\mid\Ext^i_R(M,N)\neq0\}<n$ by Theorem \ref{th2}.
\end{proof}
%%%%%%%%%%%%%%%%%%%%%%%%%%%%%%%%%%%%%%%%%%%%%%%%%%%%%%%%%%%%%%%%%%%%%%%%%%%%%%%%%%%%%%%%%%%%%%%%%%%%%%%%%%%%%%%%%
In special cases, one can improve the Theorem \ref{t4} slightly. The following is a generalization of corollary \ref{cor4}.
%%%%%%%%%%%%%%%%%%%%%%%%%%%%%%%%%%%%%%%%%%%%%%%%%%%%%%%%%%%%%%%%%%%%%%%%%%%%%%%%%%%%%%%%%%%%%%%%%%%%%%%%%%%%%%%
%%%%%%%%%%%%%%%%%%%%%%%%%%%%%%%%%%%%%%%%%%%%%%%%%%%%%%%%%%%%%%%%%%%%%%%%%%%%%%%%%%%%%%%%%%%%%%%%%%%%%%%%%%%%%%%%%%%%%%%%%%%%%%%%%%%%%%
\begin{prop}\label{t7}
Let $(R,\fm)$ be a local ring such that $\widehat{R} = S/(f)$ where $(S, \fn)$ is a
complete unramified regular local ring and $f = f_1, f_2, \ldots, f_c$ is a regular sequence of $S$
contained in ${\fn}^2$. Assume that $n\geq0$ is an integer and that $M$ and $N$ are nonzero finite $R$-modules such that $\len_R(N)<\infty$.
If $\Ext^i_R(M,N)=0$ for all $i$, $n+1\leq i\leq n+c$, then $\CI_R(M)=\sup\{i\mid\Ext^i_R(M,N)\neq0\}\leq n$.
\end{prop}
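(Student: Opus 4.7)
The plan is to mirror the inductive structure used for Theorem \ref{t4}, but to replace its base ingredient (Jothilingam's theorem over a regular local ring) by Corollary \ref{cor4}(i) in the unramified hypersurface case. This is exactly what allows us to require only $c$ consecutive vanishings instead of $c+1$, and to trade the depth bound of Theorem \ref{t4} for the finite-length hypothesis on $N$.

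First, since $R \to \widehat{R}$ is faithfully flat and $\widehat{N} \cong N$, vanishing of $\Ext^i_R(M,N)$, the value of $\CI_R(M)$, and the finite length of $N$ are all preserved under completion, so I may assume from the start that $R = S/(f_1,\ldots,f_c)$ with $S$ a complete unramified regular local ring and $f_1,\ldots,f_c \in \fn^2$ a regular sequence. I proceed by induction on $c$. The base case $c = 1$ is immediate: the hypothesis reads $\Ext^{n+1}_R(M,N) = 0$, and since $n+1 \geq 1$ and $\len_R(N)<\infty$, Corollary \ref{cor4}(i) delivers $\CI_R(M) = \sup\{i \mid \Ext^i_R(M,N) \neq 0\} < n+1$, i.e.\ $\CI_R(M) \leq n$.

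For $c > 1$, set $S' := S/(f_1,\ldots,f_{c-1})$, so that $R = S'/(f_c)$ and $S'$ itself satisfies the proposition's admissibility hypothesis with codimension $c-1$. Because $R$ and $S'$ share the residue field of $S$, $\len_{S'}(N) = \len_R(N) < \infty$. The change-of-rings spectral sequence, exactly as in Theorem \ref{t4}, yields the long exact sequence
$$\cdots \to \Ext^i_R(M,N) \to \Ext^i_{S'}(M,N) \to \Ext^{i-1}_R(M,N) \to \Ext^{i+1}_R(M,N) \to \cdots$$
The assumption $\Ext^i_R(M,N) = 0$ for $n+1 \leq i \leq n+c$ forces $\Ext^i_{S'}(M,N) = 0$ whenever both flanking $\Ext_R$ terms vanish, which gives the block $n+2 \leq i \leq n+c$, precisely $c-1$ consecutive vanishings over $S'$. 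The inductive hypothesis applied to $S'$ (with $n$ replaced by $n+1$ and $c$ by $c-1$) then yields $\CI_{S'}(M) \leq n+1$.

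Theorem \ref{th2}(i) upgrades this to $\Ext^i_{S'}(M,N) = 0$ for every $i \geq n+2$, so each connecting map in the long exact sequence furnishes an isomorphism $\Ext^{i-1}_R(M,N) \cong \Ext^{i+1}_R(M,N)$ for all $i \geq n+2$. Since $c \geq 2$, the hypothesis supplies both vanishings $\Ext^{n+1}_R(M,N) = 0 = \Ext^{n+2}_R(M,N)$; propagating them through the two resulting periodicity chains yields $\Ext^i_R(M,N) = 0$ for every $i \geq n+1$. A final appeal to Theorem \ref{th2}(i) over the complete intersection $R$ gives $\CI_R(M) = \sup\{i \mid \Ext^i_R(M,N) \neq 0\} \leq n$, as required. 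The only real obstacle here is the careful index bookkeeping: the reduction step loses exactly one consecutive vanishing, which is why the statement's $c$ values is tight, and the whole scheme relies on Corollary \ref{cor4} being strong enough to dispense with any residual depth or $(S_t)$ hypothesis at the hypersurface base.
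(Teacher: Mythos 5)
Your proposal is correct and follows essentially the same route as the paper: reduce to the complete case, induct on $c$ with Corollary \ref{cor4} as the base case, pass to the intermediate hypersurface quotient $S/(f_1,\ldots,f_{c-1})$ via the change-of-rings long exact sequence, and propagate the two-step periodicity $\Ext^{i-1}_R(M,N)\cong\Ext^{i+1}_R(M,N)$ from the two vanishing seeds $\Ext^{n+1}_R(M,N)=\Ext^{n+2}_R(M,N)=0$. The index bookkeeping matches the paper's exactly.
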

\begin{proof}
Without lose of generality we may assume that $R$ is complete and $R=S/(f)$ where $(S, \fn)$ is a
complete unramified regular local ring and $f = f_1, f_2, \ldots, f_c$ is a regular sequence of $S$ contained in ${\fn}^2$.
We argue by induction on $c$. If $c=1$, then the assertion holds by Corollary \ref{cor4}. For $c>1$, set $Q=S/(f_1,\ldots, f_{c-1})$.
Therefore, $R\cong Q/(f_c)$. Note that $\len_Q(N)<\infty$.
The change of rings spectral sequence
$$\Ext^p_R(M,\Ext^q_Q(R,N))\underset{p}{\Rightarrow}\Ext^{p+q}_Q(M,N)$$
degenerates into a long exact sequence
$$\cdots\rightarrow\Ext^i_R(M,N)\rightarrow\Ext^i_Q(M,N)\rightarrow\Ext^{i-1}_R (M,N)\rightarrow\Ext^{i+1}_R(M,N)\rightarrow\cdots.$$
It follows that $\Ext^i_Q(M,N)= 0$ for all $i$, $n+2\leq i \leq n+c$, and so by
induction hypothesis we conclude that $\CI_Q(M)\leq n+1$ and $\Ext^i_Q(M,N)=0$ for
all $i > n+1$. Therefore, $\Ext^{i-1}_R(M,N)\cong\Ext^{i+1}_R(M,N)$ for all $i > n+1$. As $c > 1$, it is
clear that $\Ext^i_R(M,N) = 0$ for all $i > n$ and so $\CI_R(M)=\sup\{i\mid\Ext^i_R(M,N)\neq0\}\leq n$ by Theorem \ref{th2}.
\end{proof}
%%%%%%%%%%%%%%%%%%%%%%%%%%%%%%%%%%%%%%%%%%%%%%%%%%%%%%%%%%%%%%%%%%%%%%%%%%%%%%%%%%%%%%%%%%%%%%%%%%%%
As an application of Theorem \ref{t4}, we can generalize \cite[Corollary 1]{Jot2} as follows.
\begin{cor}\label{cor11}
Let $R$ be a local complete intersection ring of codimension $c$ and let $M$ and $N$ be nonzero $R$--modules. Assume that $n>0$ and $t\geq0$ are integers and that the following conditions hold.
\begin{enumerate}[(i)]
\item $\Ext^{i}_R(M,N)=0$ for all $i$, $n\leq i\leq n+c$.
\item $M$ satisfies $(S_t)$.
\item $\depth_R(N)\leq n+c+t$.
\end{enumerate}
Then $\CI_R(M)=\sup\{i\mid\Ext^i_R(M,N)\neq0\}<n$.
\end{cor}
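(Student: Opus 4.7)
The plan is to reduce Corollary \ref{cor11} directly to Theorem \ref{t4} by using the Auslander--Bridger machinery of Section~2 to realize $M$ as a $t$-th syzygy; once this is done, both the vanishing range of $\Ext$ and the depth bound on $N$ shift by $t$ and match the hypotheses of Theorem \ref{t4} exactly, so the $(S_t)$ assumption is absorbed entirely into the syzygy presentation.

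To produce that presentation, I would first observe that over the Gorenstein ring $R$ the hypothesis $(S_t)$ gives $\Ext^i_R(\Tr M,R)=0$ for $1\leq i\leq t$ by \cite[Theorem~4.25]{AB}. Applying the identity $\mathcal{T}_iX\approx\Omega\,\mathcal{T}_{i+1}X$ recalled in Section~2 (valid whenever $\Ext^i_R(X,R)=0$) iteratively to $X=\Tr M$ for $i=1,\dots,t$, together with $\Tr\Tr M\approx M$, yields the telescoping chain
$$M\approx \mathcal{T}_1(\Tr M)\approx\Omega\,\mathcal{T}_2(\Tr M)\approx\cdots\approx\Omega^t\,\mathcal{T}_{t+1}(\Tr M).$$
Setting $M':=\mathcal{T}_{t+1}(\Tr M)$, one obtains a stable isomorphism $M\approx\Omega^tM'$ and therefore $\Ext^i_R(M,N)\cong\Ext^{i+t}_R(M',N)$ for every $i\geq 1$.

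With this reindexing, hypothesis (i) translates into $\Ext^j_R(M',N)=0$ for $n+t\leq j\leq n+t+c$, while hypothesis (iii) reads $\depth_R(N)\leq (n+t)+c$. Theorem \ref{t4} therefore applies to the pair $(M',N)$ with new parameter $n'':=n+t>0$ and delivers $\CI_R(M')<n+t$ together with $\Ext^j_R(M',N)=0$ for every $j\geq n+t$. Shifting back, $\Ext^i_R(M,N)=0$ for every $i\geq n$; and since every finite module over the complete intersection $R$ has finite complete intersection dimension, a final appeal to Theorem \ref{th2}(i) yields $\CI_R(M)=\sup\{i\mid \Ext^i_R(M,N)\neq 0\}<n$. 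The only delicate point is the opening telescoping argument, but it poses no real obstacle: the $(S_t)$ hypothesis supplies precisely the Ext vanishing required at each of the $t$ stages, so the stable isomorphism $M\approx\Omega^tM'$ emerges automatically, and all substantive homological content is carried by Theorem \ref{t4}.
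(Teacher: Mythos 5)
Your argument is correct, and it rests on the same underlying idea as the paper's proof: use the $(S_t)$ hypothesis to realize $M$, up to free summands, as a $t$-th syzygy, so that the vanishing window and the depth bound both shift by $t$ and Theorem \ref{t4} applies. The difference is one of packaging. The paper proceeds by induction on $t$, peeling off one cosyzygy at a time via the universal pushforward $0\rightarrow M\rightarrow F\rightarrow M_1\rightarrow 0$ and asserting that $M_1$ satisfies $(S_{t-1})$; you instead build the target module $M'=\mathcal{T}_{t+1}(\Tr M)$ in a single step, justifying $M\approx\Omega^{t}M'$ by the characterization of $(S_t)$ as $\Ext^i_R(\Tr M,R)=0$ for $1\leq i\leq t$ (\cite[Theorem 4.25]{AB}, applicable since a complete intersection is Gorenstein) together with the identity $\mathcal{T}_iX\approx\Omega\mathcal{T}_{i+1}X$ recalled in Section 2. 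These are two descriptions of the same construction --- the universal pushforward of $M$ is exactly one step of your telescope --- but your version has the merit of making explicit which $\Ext$-vanishing over $R$ is actually being used, where the paper leaves the existence of the pushforward and the $(S_{t-1})$ property of $M_1$ as an ``easy to see'' claim. The one point worth adding is the degenerate case $M'=0$: this occurs precisely when $M$ is free, where the conclusion is immediate, so in all remaining cases Theorem \ref{t4} is legitimately applied to a pair of nonzero modules.
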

\begin{proof}
We argue by induction on $t$. If $t=0$, then the assertion is clear by Theorem \ref{t4}. Now suppose that $t>0$ and consider the universal pushforward of $M$,
\begin{equation}\tag{\ref{cor11}.1}
0\rightarrow M\rightarrow F\rightarrow M_1\rightarrow0,
\end{equation}
where $F$ is free. It is easy to see that $M_1$ satisfies $(S_{t-1})$. From the exact sequence (\ref{cor11}.1), it is clear that
\begin{equation}\tag{\ref{cor11}.2}
\Ext^i_R(M,N)\cong\Ext^{i+1}_R(M_1,N) \text{ for all } i>0.
\end{equation}
Therefore, $\Ext^i_R(M_1,N)=0$ for all $i$, $n+1\leq i\leq n+c+1$. By induction hypothesis, we conclude that $\Ext^i_R(M_1,N)=0$ for all $i>n$. By (\ref{cor11}.2), $\Ext^i_R(M,N)=0$ for all $i\geq n$ and so $\CI_R(M)=\sup\{i\mid\Ext^i_R(M,N)\neq0\}<n$ by Theorem \ref{th2}.
\end{proof}
%%%%%%%%%%%%%%%%%%%%%%%%%%%%%%%%%%%%%%%%%%%%%%%%%%%%%%%%%%%%%%%%%%%%%%%%%%%%%%%%%%%%%%%%%%%%%%%%%%%%%%%5
%%%%%%%%%%%%%%%%%%%%%%%%%%%%%%%%%%%%%%%%%%%%%%%%%%%%%%%%%%%%%%%%%%%%%%%%%%%%%%%%%%%%%%%%%%%%%%%%%%%%%%%%
{\bf Acknowledgements}.
I would like to thank  Olgur Celikbas and my thesis adviser Mohammad Taghi Dibaei for
valuable suggestions and comments on this paper.
%%%%%%%%%%%%%%%%%%%%%%%%%%%%%%%%%%%%%%%%%%%%%%%%%%%%%%%%%%%%%%%%%%%%%%%%%%%%%%%%%%%%%%%%%%%%%%%%%%%%%%%%%%%%%%%%%%%%%%%%%%%%%%%%%%%%%%%%%%%%%%
%%-----------------------------------------------------------
%%% ----------------------------------------------------------------------
%%% ----------------------------------------------------------------------
\bibliographystyle{amsplain}

\begin{thebibliography}{9}

\bibitem{AY}
~T. Araya and ~Y. Yoshino, \emph{Remarks on a depth formula, a grade inequality and a conjecture
of Auslander}, Comm. Algebra 26 (1998), 3793-3806.

\bibitem{A1}
~M. Auslander, \emph{Anneaux de Gorenstein, et torsion en alg\`{e}bre commutative}, in: S\'{e}minaire d'Alg\`{e}bre
Commutative dirig\'{e} par Pierre Samuel, vol. 1966/67, Secr\'{e}tariat math\'{e}matique, Paris, 1967.

\bibitem{A2}
~M. Auslander, \emph{Modules over unramified regular local rings}, Illinois J. Math. 5 (1961), 631-647.

\bibitem{AB}
~M. Auslander and ~M. Bridger, \emph{Stable module theory}, Mem. of
the AMS 94, Amer. Math. Soc., Providence 1969.

\bibitem{AvBu}
~L.~L. Avramov, ~R.-O. Buchweitz, \emph{Support varieties and cohomology over complete intersections}. Invent. Math. 142 (2000): 285-318.

\bibitem{AGP}
~L.~L. Avramov, ~V. N. Gasharov and ~I. V. Peeva, \emph{Complete intersection dimension}, Publ. Math.
 I.H.E.S. 86 (1997), 67-114.

\bibitem{B2}
~P. Bergh, \emph{Modules with reducible complexity}, J. Algebra 310 (2007), 132-147.

\bibitem{B1}
~P. Bergh, \emph{On the vanishing of homology with modules of finite length}, Math. Scand. to appear.

\bibitem{BJ}
~P. Bergh and ~D. Jorgensen, \emph{The depth formula for modules with reducible complexity}. Illinois J. Math.
to appear.

\bibitem{Da}
~ H. Dao, \emph{Decent intersection and Tor-rigidity for modules over local hypersurfaces}, Transactions of the AMS, to appear.

\bibitem{EG}
~E. G. Evans and ~P. Griffith. \emph{Syzygies}, volume 106 of London Mathematical Society Lecture
Note Series. Cambridge University Press, Cambridge, 1985.

\bibitem{G}
~T. ~H. Gulliksen, \emph{A change of rings theorem, with applications to Poincar´e series and
intersection multiplicity}, Math. Scand. 34 (1974), 167-183.

\bibitem{HW}
~C. Huneke and ~R. Wiegand, \emph{Tensor products of modules and the rigidity of Tor}, Math. Ann. 299 (1994), 449-476.

\bibitem{Jot1}
~P. Jothilingam, \emph{Test modules for projectivity}. Proc. Amer. Math. Soc. 94 (1985), 593-596.

\bibitem{Jot2}
~P. Jothilingam, \emph{Syzygies and Ext}. Math. Z. 188 (1985), 278-282.

\bibitem{JoDu}
~P. Jothilingam and ~T. Duraivel, \emph{Test Modules for Projectivity of Duals}, Comm. Alg. 38:8 (2010), 2762-2767.

\bibitem{L}
~S. Lichtenbaum, \emph{On the vanishing of Tor in regular local rings}, Ill. J. Math. 10 (1966),
220-226.

\bibitem{Mu}
~M.~P. Murthy, \emph{Modules over regular local rings}, Illinois J. Math. 7 (1963), 558-565.

\bibitem{PS}
~C. Peskine and ~L. Szpiro, \emph{Dimension projective finie et cohomologie locale}, Inst. Hautes \`{E}tudes Sci. Publ. Math. 42 (1973), 47–119.

\bibitem{R}
~J. Rotman, \emph{An Introduction to Homological Algebra}. Academic Press, New York (1979).

\bibitem{As}
~A. Sadeghi, \emph{A note on the depth formula and vanishing of cohomology}. preprint, 2012.

\bibitem{T}
~A. Tchernev, \emph{Free direct summands of maximal rank and rigidity in projective dimension two}. Comm. Alg. 34:2 (2006), 671-679.

\end{thebibliography}
%%% ----------------------------------------------------------------------
%%% ----------------------------------------------------------------------
%%% ----------------------------------------------------------------------

\end{document}